\newtheorem{theorem}{Theorem}[section]
\newtheorem{lemma}[theorem]{Lemma}
\theoremstyle{remark}
\numberwithin{equation}{section}
\theoremstyle{definition}
\newtheorem{definition}[theorem]{Definition}
\newtheorem{example}[theorem]{Example}
\newcommand{\pa}{\partial}
\newcommand{\la}{\langle}
\newcommand{\ra}{\rangle}
\newcommand{\al}{\alpha}
\newcommand{\na}{\nabla}
\newcommand{\vp}{\varphi}
\newcommand{\Ga}{\Gamma}
\newcommand{\Om}{\Omega}
\newcommand{\de}{\delta}
\newcommand{\De}{\Delta}
\newcommand{\lam}{\lambda}
\newcommand{\R}{{\mathbb{R}}}
\newcommand{\T}{{\mathbb{T}}}
\newcommand{\E}{{\mathbb{E}}}
\newcommand{\cM}{\mathcal{M}}
\newcommand{\vep}{\varepsilon}
\newlength\savewidth
\newcommand{\be}{\begin{eqnarray}}
\newcommand{\ee}{\end{eqnarray}}
\newcommand{\nn}{\nonumber}
\newcommand{\ben}{\begin{eqnarray*}}
\newcommand{\een}{\end{eqnarray*}}
\newcommand{\cE}{{\mathcal E}}
\begin{document}
\title [A FEM for Elliptic Problems with Observational Boundary Data]
{A Finite Element Method for Elliptic Problems with Observational Boundary Data}

% author one information
\author{Zhiming Chen}
\address{LSEC, Institute of Computational Mathematics, Academy of
Mathematics and System Sciences, Chinese Academy of Sciences, Beijing 100190, China
and School of Mathematical Sciences, University of
Chinese Academy of Sciences, Beijing 100049, China}
%\curraddr{}
\email{zmchen@lsec.cc.ac.cn}
\thanks{This work was supported in part by the National Center for Mathematics and Interdisciplinary 
Sciences, CAS and China NSF under the grant 11021101 and 11321061.}
% author two information
\author{Rui Tuo}
\address{Institute of Systems Science,
Academy of Mathematics and System Sciences, Chinese Academy of Sciences,
Beijing 100190, China}
\email{tuorui@amss.ac.cn}
% author three information
\author{Wenlong Zhang}
\address{School of Mathematical Science, University of
Chinese Academy of Sciences, Beijing 100190, China.}
\email{zhangwl@lsec.cc.ac.cn}
\subjclass[2000]{65N30, 65N15}
% Use this one if you are using an older version of amsart.
%\subjclass{}
%\date{}

% at present the "communicated by" line appears only in ERA and PROC
%\commby{}

%\dedicatory{}
\date{\today}
\begin{abstract} In this paper we propose a finite element method for solving elliptic equations with the observational Dirichlet boundary data which may subject to random noises. The method is based on the weak formulation of Lagrangian multiplier. 
We show the convergence of the random 
finite element error in expectation and, when the noise is sub-Gaussian, in the Orlicz $\psi_2$-norm which implies
the probability that the finite element error estimates are violated decays exponentially. Numerical examples are included.
\end{abstract}

\maketitle

%%%%%%%%%%%%%%%%%%%%%%%%%%%%%%%%%%%%%%%%%%%%%%%%%%%%%%%%%%%%%%%
\section{Introduction}
\setcounter{equation}{0}
%%%%%%%%%%%%%%%%%%%%%%%%%%%%%%%%%%%%%%%%%%%%%%%%%%%%%%%%%%%%%%%

In many scientific and engineering applications involving partial differential equations, the input data such as sources or boundary conditions are usually given through the measurements which may subject to random noises. Let $\Om\subset \R^2$ be a bounded domain with smooth boundary $\Ga$. In this paper we consider the problem to find $u\in H^1(\Om)$ such that
\be\label{p1}
-\Delta u=f\ \ \mbox{in }\Om,\ \ \ \ u=g_0\ \ \mbox{on }\Ga.
\ee
Here $f\in L^2(\Om)$ is given but the boundary condition $g_0\in H^2(\Ga)$ is generally unknown. We assume we know the measurements 
$g_i=g_0(x_i)+e_i$, $i=1,2,\cdots,n$, where $\T=\{x_i: 1\le i\le n\}$ is the set of the measurement locations on the boundary $\Ga$ and $e_i$, $i=1,2,\cdots,n$, are independent identically distributed random variables over some probability space $(\mathfrak{X}, \mathcal{F},\mathbb{P})$ satisfying $\E[e_i]=0$ and $\E[e_i^2]=\sigma>0$. In this paper $\mathbb{P}$ denotes the probability measure and $\E[X]$ denotes the expectation of the random variable $X$. We remark that for simplicity we only consider the problem of observational Dirichlet boundary data in this paper and the problem with observational sources $f$ or
other type of boundary conditions can be studied by the same method.

A different perspective of solving partial differential equations with uncertain input data due to incomplete knowledge or inherent variability in the system has drawn considerable interests in recent years (see e.g. \cite{btz04, cd15, gwz14, tt15} and the references therein). The goal of those studies is to learn about the uncertainties in system outputs of interest, given information about the uncertainties in the system inputs which are modeled as random field. This goal usually leads to the mathematical problem of breaking the curse of dimensionality for solving partial differential equations having large number of parameters. 

The classical problem to find a smooth function from the knowledge of its observation at scattered locations subject to random noises is well studied in the literature \cite{w90}. One popular model to tackle this classical problem is to use the thin plate spline model \cite{d77, u88} which can be efficiently solved by using finite element methods \cite{am97, rha03, ctz16}. The scattered data in our problem \eqref{p1} are defined on the boundary of the domain and a straightforward application of the method developed in \cite{d77, u88, am97, rha03, ctz16} would lead to solve a fourth order elliptic equation on the boundary which would be much more expansive than the method proposed in this paper.

Our method is based on the following weak formulation of Lagrangian multiplier for \eqref{p1} in \cite{b74}:
Find $(u,\lam)\in H^1(\Om)\times H^{-1/2}(\Ga)$ such that
\be
(\na u,\na v)+\la\lam,v\ra&=&(f,v),\ \ \forall v\in H^1(\Om),\label{p2}\\
\la \mu,u\ra&=&\la \mu,g\ra,\ \ \forall\mu\in H^{1/2}(\Ga),\label{p3}
\ee
where $(\cdot,\cdot)$ is the duality pairing between $H^1(\Om)$ and $H^1(\Om)'$ which is an extension of the inner product of $L^2(\Om)$ and $\la\cdot,\cdot\ra$ is the duality pairing between $H^{1/2}(\Ga)$ and $H^{-1/2}(\Ga)$ which is an extension of the inner product of $L^2(\Ga)$. Let $\Om_h$ be a polygonal domain which approximates the domain $\Om$. Let $V_h\subset H^1(\Om_h)$ and $Q_h\subset L^2(\Ga)$ be the finite element spaces for approximating the field variable and the Lagrangian multiplier. Our finite element method is defined as follows: Find $(u_h,\lam_h)\in V_h\times Q_h$ such that 
\ben
(\na u_h,\na v_h)_{\Om_h}+\la\lam_h,v_h\ra_n&=&(I_hf,v_h)_{\Om_h},\ \ \forall v_h\in V_h,\\
\la \mu_h,u_h\ra_n&=&\la \mu_h,g\ra_n,\ \ \forall\mu_h\in Q_h,
\een
where $(\cdot,\cdot)_{\Om_h}$ is the inner product of $L^2(\Om_h)$, $\la \cdot,\cdot\ra_n$ is some quadrature rule for approximating $\la \cdot,\cdot\ra$, and $I_h$ is some finite element interpolation operator (we refer to section 2 for the precise
definitions). We remark that while the method of Lagrangian multiplier is one of the standard ways in enforcing Dirichlet boundary condition on smooth domains, it is essential here for solving the problem with Dirichlet observational boundary data even when the domain $\Om$ is a polygon. One can also combine the techniques developed in this paper with other weak formulations in \cite{s95} to deal with the observational Dirichlet boundary condition.

Our analysis in section 3 shows that
\be\label{p6}
%& &\E[\|u-u_h\circ\Phi_h^{-1}\|_{H^1(\Om)}+h^{1/2}\|\lam-\lam_h\|_{H^{-1/2}(\Ga)}]\\
%&\le&Ch|\ln h|^{1/2}(\|u\|_{H^2(\Om)}
%+\|f\|_{H^2(\Om)})+Ch^{-1}(\sigma n^{-1/2}),\label{p5}\\
\E\left[\|u-u_h\circ\Phi_h^{-1}\|_{L^2(\Om)}\right]\le Ch^2|\ln h|\De(u,f,g_0)+C|\ln h|(\sigma n^{-1/2}),
\ee
where $\De(u,f,g_0)=\|u\|_{H^2(\Om)}+\|f\|_{H^2(\Om)}+\|g_0\|_{H^2(\Ga)}$ and $\Phi_h:\Om_h\to\Om$ is the Lenoir homeomorphism defined in section 3. This error estimate suggests that in order to achieve the optimal convergence, one should take the number of sampling points satisfying $\sigma n^{-1/2}\le Ch^2$ to compute the solution over a finite element mesh of the mesh size $h$. For problems having Neumann or Robin boundary conditions, the same method of the analysis in this paper yields this relation should be changed to $\sigma n^{-1/2}\le Ch$. This suggests the importance of appropriate balance between the number of measurements and the finite element mesh sizes for solving PDEs with random observational data.

If the random variables $e_i, 1\le i\le n$, are also sub-Gaussian, we prove by resorting to the theory of empirical processes that
for any $z>0$,
\ben
\mathbb{P}\left(\|u-u_h\circ\Phi_h^{-1}\|_{L^2(\Om)}\ge \left[h^2|\ln h|\Delta(u,f,g_0)+|\ln h|(\sigma n^{-1/2})\right]z\right)\le 2e^{-Cz^2}.
\een
This implies that the probability of the random error $\|u-u_h\|_{L^2(\Om)}$ violating the error estimate in \eqref{p6} decays exponentially.

The layout of the paper is as follows. In section 2 we introduce our finite element formulation and derive an error estimate
based on the Babu\v{s}ka-Brezzi theory. In section 3 we study the random finite element error in terms of the expectation. In section 4 we show the stochastic convergence of our method when the random noise is sub-Gaussion. In section 5 we
report some numerical examples to confirm our theoretical analysis.

%%%%%%%%%%%%%%%%%%%%%%%%%%%%%%%%%%%%%%%%%%%%%%%%%%%%%%%%%%%%%%%
\section{The finite element method}
\setcounter{equation}{0}
%%%%%%%%%%%%%%%%%%%%%%%%%%%%%%%%%%%%%%%%%%%%%%%%%%%%%%%%%%%%%%%

We start by introducing the finite element meshes. Let $\cM_h$ be a mesh over $\Om$ consisting of curved triangles. We assume each element $K\in\cM_h$ has at most one curved edge and the edge of the element $K$ is curved only when its two vertices all lie on the boundary $\Ga$. For any $K\in\cM_h$, we denote $\tilde K$ the straight triangle which has the same vertices as $K$. We set $\Om_h=\cup_{K\in\cM_h}\tilde K$ and assume the mesh $\tilde\cM_h=\{\tilde K:K\in\cM_h\}$ over $\Om_h$ is shape regular and quasi-uniform: 
\be\label{a0}
h_{\tilde K}\le C\rho_{\tilde K},\ \ \forall K\in\cM_h,\ \ \ \ h_{\tilde K}\le Ch_{\tilde K'},\ \ \forall K,K'\in\cM_h,
\ee
where $h_{\tilde K}$ and $\rho_{\tilde K}$ are the diameter of $\tilde K$ and the diameter of the biggest circle inscribed in $\tilde K$. The finite element space for the field variable is then defined as
\ben
V_h=\{v_h\in C(\bar\Om_h): v_h|_{\tilde K}\in P_1(\tilde K),\forall \tilde K\in\tilde\cM_h\},
\een
where $P_1(\tilde K)$ is the set of the linear polynomials on $\tilde K$. As usual, we demote $h=\max_{\tilde K\in\tilde \cM_h}h_{\tilde K}$.

Let $\cE_h=\{K\cap \Ga: K\in\cM_h\}$ be the mesh of $\Ga$ which is induced from $\cM_h$. We assume that each element $E\in\cE_h$ is the image of the reference element $\hat E=[0,1]$ under a smooth mapping $F_E$. Since the boundary $\Ga$ is smooth, the argument in \cite[Theorem 4.3.3]{c78} implies that if the diameter of the element $h_E$ is sufficiently small,
\be
\|\hat DF_E\|_{L^\infty(\hat E)}\le Ch_E,\ \ \|D_TF_E^{-1}\|_{L^\infty(E)}\le Ch_E^{-1},\ \ \forall E\in\cE_h,\label{a1}
\ee
where $\hat D$ is the derivative in $\hat E$ and $D_T$ is the tangential derivative on $\Ga$. It is then obvious that there are constants $C_1,C_2$ independent of the mesh $\cM_h$ such that $C_1h\le h_E\le C_2h,\ \ \forall E\in\cE_h$. 
We use the following finite element space for the Lagrangian multiplier \cite{s95}:
\be\label{x1}
\qquad Q_h=\{\mu_h\in C(\Ga):\mu_h|_E=\hat\mu_h\circ F_E^{-1} \mbox{ for some }\hat\mu_h\in P_1(\hat E),\forall E\in\cE_h\},
\ee
where $P_1(\hat E)$ is the set of linear polynomials over $\hat E$.

We assume that the measurement locations $\mathbb{T}$ are uniformly distributed over $\Ga$ in the sense that \cite{u88} there exists a constant $B>0$ such that $\frac{s_{\max}}{s_{\min}} \leq B$, where
\ben
s_{\max}=\mathop {\rm sup}\limits_{x\in \Ga} \mathop {\rm inf}\limits_{1 \leq i \leq n} s(x,x_i) ,\ \ \ \
s_{\min}=\mathop {\rm inf}\limits_{1 \leq i \neq j \leq n} s(x_i,x_j),
\een
where $s(x,y)$ is the arc length between $x,y\in\Ga$. It is easy to see that there exist constants $B_1,B_2$ such that $B_1n^{-1}\le s_{\max}\le Bs_{\min}\le B_2n^{-1}$. 

We introduce the empirical inner product between the data and any function $v\in C(\Ga)$ as $\la g,v\ra_n=\sum^n_{i=1}\al_ig_iv(x_i)$. We also write $\la u,v\ra_n=\sum^n_{i=1}\al_iu(x_i)v(x_i)$ for any $u,v\in C(\Ga)$ and the empirical norm $\|u\|_n=(\sum_{i=1}^{n} \al_iu^2(x_i))^{1/2}$ for any $u\in C(\Ga)$. We remark that the empirical norm is in fact a semi-norm on $C(\Ga)$. The weights $\al_i$, $i=1,2\cdots,n$, are chosen such that $\la u,v\ra_n$ is a good quadrature formula for the inner product $\la u,v\ra$ that we describe now. 

Let $\mathbb{T}_E=\mathbb{T}\cap E$ be the measurement points in $E\in\cE_h$. Since the measurement 
locations are uniformly distributed, $n_E=\#\mathbb{T}_E\sim nh_E$. We further assume $t_{j,E}=
F_E^{-1}(x_j)$, $j=1,2\cdots,n_E$, are ordered as $0=t_{0,E}\le t_{1,E}<t_{2,E}<\cdots<t_{n_E,E}\le t_{n_E+1,E}=1$. 
We remark that the vertices of the element $E$ need not be at the measurement locations. Denote $\De t_{j,E}=t_{j,E}-t_{j-1,E}$, $j=1,2,\cdots,n_E+1$. We define the following quadrature formula 
\be
Q_{\mathbb{T}_E}(w)=\sum^{n_E}_{j=1}\omega_{j,E} w(t_{j,E}),\ \ \forall w\in C(\hat E),\label{b1}
\ee
where $\omega_{1,E}=\De t_{1,E}+\frac 12\De t_{2,E},\omega_{j,E}=\frac 12(\De t_{j,E}+\De t_{j+1,E}),j=2,\cdots,n_E-1,\omega_{n_E,E}=\frac 12\De t_{n_E,E}+\De t_{n_E+1,E}$.

\begin{lemma}\label{lem:2.1} There exists a constant $C$ independent of $\mathbb{T}_E$ such that
\ben
\left|\int_0^1w(t)dt-Q_{\mathbb{T}_E}(w)\right|&\le&C\int^1_0|w''(t)|dt+\frac 12\De t_{1,E}\int^{t_{1,E}}_{t_{0,E}}|w'(t)|dt\\
&+&\frac 12\De t_{n_E+1,E}\int^{t_{n_E+1,E}}_{t_{n_E,E}}|w'(t)|dt,\ \ \forall w\in W^{2,1}(\hat E).
\een
\end{lemma}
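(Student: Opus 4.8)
The plan is to prove this as a quantitative error bound for the composite trapezoidal-type rule $Q_{\mathbb{T}_E}$ applied on $\hat E=[0,1]$, using the standard technique of writing the quadrature error element-by-element via Peano kernel (integration by parts) estimates on each subinterval $[t_{j-1,E},t_{j,E}]$. First I would observe that the interior weights $\omega_{j,E}$, $j=2,\dots,n_E-1$, are exactly the composite trapezoidal weights associated with the partition $0=t_{0,E}<t_{1,E}<\dots<t_{n_E,E}<t_{n_E+1,E}=1$ \emph{restricted to the nodes that are actual sample points}: on each full interior subinterval $[t_{j-1,E},t_{j,E}]$ the trapezoidal rule assigns $\tfrac12\Delta t_{j,E}$ to each endpoint, and summing the contributions at $t_{j,E}$ from $[t_{j-1,E},t_{j,E}]$ and $[t_{j,E},t_{j+1,E}]$ gives $\tfrac12(\Delta t_{j,E}+\Delta t_{j+1,E})$. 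The exceptional weights $\omega_{1,E}$ and $\omega_{n_E,E}$ arise because the endpoints $t_{0,E}=0$ and $t_{n_E+1,E}=1$ of $\hat E$ are not required to be sample points, so the trapezoidal mass that "should" sit at $t_{0,E}$ (namely $\tfrac12\Delta t_{1,E}$) is instead lumped onto $t_{1,E}$, and similarly at the right end; this lumping is precisely the source of the two first-order boundary terms in the statement.

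The key steps, in order, are: (1) Split $\int_0^1 w\,dt = \sum_{j=1}^{n_E+1}\int_{t_{j-1,E}}^{t_{j,E}} w\,dt$ and likewise write $Q_{\mathbb{T}_E}(w)$ as the genuine composite trapezoidal sum over this partition plus the two correction terms coming from moving the mass off $t_{0,E}$ and $t_{n_E+1,E}$. (2) On each \emph{interior} subinterval $[t_{j-1,E},t_{j,E}]$ with $2\le j\le n_E$, apply the one-dimensional Peano estimate for the trapezoidal rule: $\bigl|\int_{t_{j-1,E}}^{t_{j,E}}w\,dt-\tfrac12\Delta t_{j,E}(w(t_{j-1,E})+w(t_{j,E}))\bigr|\le C\,(\Delta t_{j,E})^2\int_{t_{j-1,E}}^{t_{j,E}}|w''|\,dt$ — or, more safely for the $W^{2,1}$ setting, the sharper form with the Peano kernel bounded by $\tfrac12\Delta t_{j,E}\cdot\Delta t_{j,E}$ on that subinterval, so after summing and using $\Delta t_{j,E}\le 1$ one gets $C\int_0^1|w''|\,dt$. (3) For the correction at the left end: the quantity $\bigl|\int_{t_{0,E}}^{t_{1,E}}w\,dt-(\Delta t_{1,E}+\tfrac12\Delta t_{2,E})w(t_{1,E})-\text{(trapezoid piece on }[t_{1,E},t_{2,E}])\bigr|$ must be handled; here one only has the node $t_{1,E}$ available, so the best one can do on $[t_{0,E},t_{1,E}]$ is a midpoint-or-rectangle rule centered at $t_{1,E}$, whose error is controlled by $\int_{t_{0,E}}^{t_{1,E}}|w'|\,dt$ with a factor $\Delta t_{1,E}$ — this yields exactly the term $\tfrac12\Delta t_{1,E}\int_{t_{0,E}}^{t_{1,E}}|w'|\,dt$ after tracking constants, with the residual second-derivative part absorbed into the $\int_0^1|w''|$ term. (4) Treat the right end symmetrically to produce the $\tfrac12\Delta t_{n_E+1,E}\int_{t_{n_E,E}}^{t_{n_E+1,E}}|w'|$ term. (5) Sum all contributions and collect constants, noting the constant $C$ depends only on the trapezoidal Peano kernel (it is universal), hence independent of $\mathbb{T}_E$.

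The main obstacle I anticipate is the careful bookkeeping at the two boundary subintervals $[t_{0,E},t_{1,E}]$ and $[t_{n_E,E},t_{n_E+1,E}]$: because these intervals contain no admissible quadrature node at their outer endpoint, one cannot use a second-order (trapezoidal) estimate there and must drop to a first-order estimate, and one has to verify that the \emph{specific} weights $\omega_{1,E}=\Delta t_{1,E}+\tfrac12\Delta t_{2,E}$ and $\omega_{n_E,E}=\tfrac12\Delta t_{n_E,E}+\Delta t_{n_E+1,E}$ are the ones that make the first-order constant come out to exactly $\tfrac12\Delta t_{1,E}$ (resp.\ $\tfrac12\Delta t_{n_E+1,E}$), rather than something larger. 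Concretely, one writes $\int_{t_{0,E}}^{t_{1,E}}w\,dt-\Delta t_{1,E}\,w(t_{1,E})=\int_{t_{0,E}}^{t_{1,E}}(w(t)-w(t_{1,E}))\,dt=-\int_{t_{0,E}}^{t_{1,E}}\!\int_t^{t_{1,E}}w'(s)\,ds\,dt$, and a Fubini/Tonelli rearrangement gives $-\int_{t_{0,E}}^{t_{1,E}}(s-t_{0,E})w'(s)\,ds$, whose absolute value is $\le\Delta t_{1,E}\int_{t_{0,E}}^{t_{1,E}}|w'|$; getting the sharp factor $\tfrac12$ instead of $1$ will require either a slightly more refined kernel computation or re-centering, and that is the delicate point. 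Everything else — the interior trapezoidal estimates and the summation — is routine.
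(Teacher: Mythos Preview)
Your approach is correct and close in spirit to the paper's, but the paper short-circuits exactly the ``delicate point'' you flag. The paper introduces the full composite trapezoidal rule
\[
\tilde Q_{\mathbb{T}_E}(w)=\sum_{j=1}^{n_E+1}\frac{\Delta t_{j,E}}{2}\bigl(w(t_{j-1,E})+w(t_{j,E})\bigr)
\]
over \emph{all} subintervals (including the two boundary ones), invokes Bramble--Hilbert once to get $\bigl|\int_0^1 w-\tilde Q_{\mathbb{T}_E}(w)\bigr|\le C\int_0^1|w''|$, and then computes the difference $Q_{\mathbb{T}_E}-\tilde Q_{\mathbb{T}_E}$ \emph{algebraically}. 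That difference is exactly
\[
\tfrac12\Delta t_{1,E}\bigl(w(t_{1,E})-w(t_{0,E})\bigr)+\tfrac12\Delta t_{n_E+1,E}\bigl(w(t_{n_E,E})-w(t_{n_E+1,E})\bigr),
\]
and the fundamental theorem of calculus finishes the job. So the constant $\tfrac12$ you worry about is not a sharp Peano-kernel constant to be extracted from a rectangle-rule estimate; it simply falls out of the bookkeeping $Q-\tilde Q$, because moving the mass $\tfrac12\Delta t_{1,E}$ from $t_{0,E}$ to $t_{1,E}$ changes the quadrature value by $\tfrac12\Delta t_{1,E}(w(t_{1,E})-w(t_{0,E}))$.

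Your decomposition---trapezoidal on the interior subintervals $[t_{1,E},t_{n_E,E}]$ and a one-sided rectangle rule on $[t_{0,E},t_{1,E}]$ and $[t_{n_E,E},t_{n_E+1,E}]$---also sums exactly to $Q_{\mathbb{T}_E}$ and does lead to a valid proof: a Taylor expansion of the rectangle error about $t_{1,E}$ indeed gives $\bigl|\int_{t_{0,E}}^{t_{1,E}}w-\Delta t_{1,E}w(t_{1,E})\bigr|\le\tfrac12\Delta t_{1,E}\int_{t_{0,E}}^{t_{1,E}}|w'|+C(\Delta t_{1,E})^2\int_{t_{0,E}}^{t_{1,E}}|w''|$, with the second term absorbed. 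But this is more work than necessary; the paper's route avoids it entirely.
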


\begin{proof} We introduce the standard piecewise trapezoid quadrature rule 
\be\label{b5}
\tilde Q_{\mathbb{T}_E}(w)=\sum^{n_E+1}_{j=1}\De t_{j,E}\frac{w(t_{j-1,E})+w(t_{j,E})}2,
\ee
which is exact for linear functions. By the Bramble-Hilbert lemma we know that there exists a constant $C$ such that
\ben
\left|
\int^1_0 w(t)dt-\tilde Q_{\mathbb{T}_E}(w)\right|\le C\int_0^1|w''(t)|dt,\ \ \forall w\in W^{2,1}(\hat E).
\een
Now the lemma follows since
\be\label{b6}
& &Q_{\mathbb{T}_E}(w)-\tilde Q_{\mathbb{T}_E}(w)\\
&=&\frac 12\De t_{1,E}(w(t_{1,E})-w(t_{0,E}))+\frac 12\De t_{n_E+1,E}(w(t_{n_E,E})-w(t_{n_E+1,E})).\nn
\ee
This completes the proof.
\end{proof}

Now for any $v\in C(\Ga)$ we can define the following quadrature rule which defines the weights $\al_j$, $j=1,2,\cdots,n$, in the empirical inner product,
\be
\int_\Ga v ds&=&\sum_{E\in\cE_h}\int_0^1v(F_E(t))|F_E'(t)|dt\label{b2}\\
&\approx&\sum_{E\in\cE_h}\sum^{n_E}_{j=1}\omega_{j,E}|F_E'(t_{j,E})|v(x_j)\nn\\
&=&\sum^n_{j=1}\al_jv(x_j),\ \ \ \ \al_j=\sum_{E\in\cE_h,x_j\in \mathbb{T}_E}\omega_{j,E}|F_E'(t_{j,E})|.\nn
\ee
Since $\De t_{1,E}\le C\De t_{2,E}, \De t_{n_E+1,E}\le C\De t_{n_E,E}$, and $\De t_{j,E}/\De t_{k,E}\le C$ for any $j,k=2,\cdots,n_E$, because the points in $\mathbb{T}$ are uniformly distributed, we have $\omega_{j,E}\sim 1/n_E\sim 1/(nh_E)$. This implies by \eqref{a1} there exist constants $B_3,B_4$ such that
\be\label{b3}
B_3n^{-1}\le \al_j\le B_4 n^{-1},\ \ j=1,2,\cdots,n.
\ee

Let $y_j, j=1,2,\cdots,J$, be the nodes of the mesh $\cM_h$ on $\Ga$. For any $v_h\in V_h$, we define $\Pi_h v_h\in Q_h$ such that $\Pi_h v_h(y_j)=v_h(y_j), j=1,2,\cdots,J$. For any $E\in\cE_h$, let $\tilde E$ be the segment connecting
two vertices of $E$ and denote $F_{\tilde E}:\hat E\to\tilde E$ the affine mapping from the reference element $\hat E$ to $\tilde E$. Then $(\Pi_h v_h)(F_E(t))=v_h(F_{\tilde E}(t))$, $\forall t\in\hat E$.

Now we are in the position to define the finite element solution for the problem \eqref{p2}-\eqref{p3}. Given $f\in H^2(\Om)$
and the observation $g_i$ at $x_i$ of the boundary value $g_0(x_i)$, $i=1,2,\cdots n$, find $(u_h,\lam_h)\in V_h\times Q_h$ such that 
\be
(\na u_h,\na v_h)_{\Om_h}+\la\lam_h,\Pi_hv_h\ra_n&=&(I_hf,v_h)_{\Om_h},\ \ \forall v_h\in V_h,\label{a4}\\
\la \mu_h,\Pi_hu_h\ra_n&=&\la \mu_h,g\ra_n,\ \ \forall\mu_h\in Q_h,\label{a5}
\ee
where $(\cdot,\cdot)_{\Om_h}$ is the inner product of $L^2(\Om_h)$ and $I_h: C(\bar\Om)\to V_h$ is the standard Lagrange interpolation operator. The interpolation operator $I_h$ can be replaced by the Cl\'ement interplant \cite{c75} if the source $f$ is less regular. We remark that the computation in \eqref{a4}-\eqref{a5} does not involve any geometric representation of the boundary $\Ga$ due to the introduction of the quadrature.

Following \cite{p80, s95} we introduce the following mesh-dependent Sobolev norms
\ben
\|v\|_{1/2,h}^2=\sum_{E\in\cE_h}h_E^{-1}\|v\|_{L^2(E)}^2,\ \ \|v\|_{-1/2,h}^2=\sum_{E\in\cE_h}h_E\|v\|_{L^2(E)}^2,\ \ \forall v\in L^2(\Ga).
\een
We use the following norms for functions $v_h\in V_h$, $\mu_h\in Q_h$
\ben
\|v_h\|_{V_h}=\left(\|\na v_h\|_{L^2(\Om_h)}^2+\|\Pi_h v_h\|_{1/2,h}^2\right)^{1/2},\ \ \|\mu_h\|_{Q_h}=\|\mu_h\|_{-1/2,h}.
\een

We consider now the well-posedness of the discrete problem \eqref{a4}-\eqref{a5} in the framework of Babu\v{s}ka-Brezzi theory. We start from the following simple lemma.

\begin{lemma}\label{lem:2.2} There exists a constant $C$ such that
\ben
|\la 1,v\ra-\la 1,v\ra_n|\le C\sum_{E\in\cE_h}\int^1_0\left(h_E|\hat v''_E|+h_E^2|\hat v'_E|+h^3_E|\hat v_E|\right)dt,\ \ \forall v\in W^{2,1}(\Ga),
\een
where $\hat v_E(t)=v|_E(F_E(t))$ for any $E\in\cE_h$.
\end{lemma}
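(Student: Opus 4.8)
The plan is to localise the quadrature error to the boundary elements and then invoke Lemma~\ref{lem:2.1} on each reference interval. Using the identity \eqref{b2} and the formula for the weights $\al_j$ there — each measurement point belonging to a single element, with the obvious modification should it coincide with a vertex — I would first write
\[
\la 1,v\ra-\la 1,v\ra_n=\sum_{E\in\cE_h}\left(\int_0^1\hat v_E(t)\,|F_E'(t)|\,dt-Q_{\mathbb{T}_E}\!\big(\hat v_E\,|F_E'|\big)\right),
\]
since $\la 1,v\ra=\int_\Ga v\,ds=\sum_E\int_0^1 v(F_E(t))|F_E'(t)|\,dt$ and $\la 1,v\ra_n=\sum_j\al_j v(x_j)=\sum_E\sum_{j=1}^{n_E}\om_{j,E}|F_E'(t_{j,E})|\,\hat v_E(t_{j,E})$. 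Because $F_E$ is a smooth diffeomorphism onto $E$, the function $w_E:=\hat v_E\,|F_E'|$ lies in $W^{2,1}(\hat E)$, and Lemma~\ref{lem:2.1} bounds the $E$-th summand by $C\int_0^1|w_E''|\,dt$ plus the two end-interval terms $\tfrac12\De t_{1,E}\int_{t_{0,E}}^{t_{1,E}}|w_E'|\,dt$ and $\tfrac12\De t_{n_E+1,E}\int_{t_{n_E,E}}^{t_{n_E+1,E}}|w_E'|\,dt$.

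Next I would record the scaling of the Jacobian $J_E:=|F_E'|$. From \eqref{a1}, and its higher-order counterpart obtained by the same argument of \cite[Theorem 4.3.3]{c78} (here $\Ga$ is smooth), one has $|F_E^{(k)}(t)|\le Ch_E^k$ on $\hat E$ for $k=1,2,3$, while $\|D_TF_E^{-1}\|_{L^\infty(E)}\le Ch_E^{-1}$ yields the pointwise lower bound $|F_E'(t)|\ge c\,h_E$. Differentiating $J_E=(F_E'\cdot F_E')^{1/2}$ then gives $\|J_E\|_{L^\infty(\hat E)}\le Ch_E$, $\|J_E'\|_{L^\infty(\hat E)}\le Ch_E^2$ and $\|J_E''\|_{L^\infty(\hat E)}\le Ch_E^3$. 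Applying the Leibniz rule to $w_E=\hat v_E J_E$ and to $w_E'$, these bounds produce
\[
\int_0^1|w_E''|\,dt\le C\!\int_0^1\!\big(h_E|\hat v_E''|+h_E^2|\hat v_E'|+h_E^3|\hat v_E|\big)dt,\qquad
\int_0^1|w_E'|\,dt\le C\!\int_0^1\!\big(h_E|\hat v_E'|+h_E^2|\hat v_E|\big)dt,
\]
so that after summing over $E\in\cE_h$ the principal term is already of the asserted form.

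Finally I would absorb the two end-interval terms. Since $\mathbb{T}$ is uniformly distributed, $\De t_{1,E}\le C\om_{1,E}$ and $\De t_{n_E+1,E}\le C\om_{n_E,E}$ are both comparable to $n_E^{-1}\sim(nh_E)^{-1}$, hence dominated by $Ch_E$ once the sampling is fine relative to $h$; therefore
\[
\tfrac12\De t_{1,E}\int_{t_{0,E}}^{t_{1,E}}|w_E'|\,dt\le Ch_E\int_0^1|w_E'|\,dt\le C\!\int_0^1\!\big(h_E^2|\hat v_E'|+h_E^3|\hat v_E|\big)dt,
\]
and likewise for the other endpoint. Summing the three contributions over $E\in\cE_h$ finishes the proof. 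I expect the two end-interval terms to be the main obstacle: one has to exploit the quasi-uniformity of the measurement points so that the boundary gaps $\De t_{1,E},\De t_{n_E+1,E}$ are controlled by a typical interior spacing $\sim(nh_E)^{-1}$, and one needs the Jacobian estimates $\|J_E'\|_\infty\le Ch_E^2$, $\|J_E''\|_\infty\le Ch_E^3$ — which is where the smoothness of $\Ga$ and the lower bound $|F_E'|\gtrsim h_E$ enter.
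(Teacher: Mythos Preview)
Your proposal is correct and follows exactly the approach of the paper. The paper's own proof is extremely brief: it records $|F_E''(t)|\le Ch_E^2$, $|F_E'''(t)|\le Ch_E^3$, writes $|\la 1,v\ra-\la 1,v\ra_n|\le\sum_{E\in\cE_h}|\int_E v\,ds-Q_{\mathbb{T}_E}(\hat v_E|F_E'|)|$, and then says the result follows from Lemma~\ref{lem:2.1} with $w=\hat v_E|F_E'|$, omitting the details you supply. Your treatment of the Jacobian derivatives (including the lower bound $|F_E'|\gtrsim h_E$ needed to differentiate $J_E=|F_E'|$) and of the two end-interval terms is more explicit than the paper's; in particular, your observation that absorbing those end terms requires $\De t_{1,E},\De t_{n_E+1,E}\lesssim h_E$, i.e.\ sampling fine relative to $h$, is a point the paper leaves implicit.
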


\begin{proof} We first note that since $\Ga$ is smooth, we have $|F_E''(t)|\le Ch_E^2, |F_E'''(t)|\le Ch_E^3$ for any $E\in\cE_h$.
Since
\ben
|\la 1,v\ra-\la 1,v\ra_n|\le \sum_{E\in\cE_h}\left|\int_E vds-Q_{\mathbb{T}_E}(\hat v_E(t)|F_E'(t)|)\right|,
\een
the lemma follows easily from Lemma \ref{lem:2.1} by taking $w=\hat v_E(t)|F_E'(t)|$ in each element $E\in\cE_h$. We omit the details.
\end{proof}

\begin{lemma}\label{lem:2.3}
Let $K_h=\{v_h\in V_h:\la \Pi_hv_h,\mu_h\ra_n=0,\ \ \forall\mu_h\in Q_h\}$. There exists a constant $\al>0$ independent of $h,n$
such that 
\ben
(\na v_h,\na v_h)_{\Om_h}\ge \al\|v_h\|_{V_h}^2,\ \ \forall v_h\in K_h.
\een
\end{lemma}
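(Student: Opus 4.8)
The plan is to establish a discrete Poincaré–Friedrichs type inequality on the kernel space $K_h$. Since $\|v_h\|_{V_h}^2 = \|\nabla v_h\|_{L^2(\Om_h)}^2 + \|\Pi_h v_h\|_{1/2,h}^2$, it suffices to bound the boundary term: I must show that there is a constant $C$ independent of $h,n$ such that
\be
\|\Pi_h v_h\|_{1/2,h}^2 \le C\,\|\nabla v_h\|_{L^2(\Om_h)}^2,\qquad \forall v_h\in K_h.\label{eq:goal}
\ee
The usual trace inequality would give $\|v_h\|_{L^2(\Ga_h)}\le C(\|v_h\|_{L^2(\Om_h)}+\|\nabla v_h\|_{L^2(\Om_h)})$ combined with an inverse estimate to absorb the $h_E^{-1}$ weight, but the $\|v_h\|_{L^2(\Om_h)}$ term is the obstruction — it is not controlled by the gradient alone without using the defining property of $K_h$.

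First I would exploit the constraint. Taking $\mu_h=\Pi_h v_h\in Q_h$ in the definition of $K_h$ shows $\|\Pi_h v_h\|_n = 0$, i.e. $\Pi_h v_h(x_i)=0$ for all measurement points $x_i\in\T$, which (since $\Pi_h v_h$ is piecewise linear on a mesh of size $h$ with $n_E\ge 1$ points in each $E$, as $n_E\sim nh_E$) forces $\Pi_h v_h$ to be small in the $L^2(\Ga)$ sense. More precisely I would use the quadrature consistency of Lemma~\ref{lem:2.2} — or rather a local version of it — to pass from $\la \Pi_h v_h,\Pi_h v_h\ra_n=0$ to a bound on $\la 1,(\Pi_h v_h)^2\ra = \|\Pi_h v_h\|_{L^2(\Ga)}^2$ in terms of the derivative of $\Pi_h v_h$ along $\Ga$. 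Indeed $(\Pi_h v_h)^2$ restricted to $E$, pulled back by $F_E$, lies in $W^{2,1}(\hat E)$, and Lemma~\ref{lem:2.1} applied to $w=(\widehat{\Pi_h v_h})_E^2\,|F_E'|$ bounds $\big|\int_E (\Pi_h v_h)^2\,ds - \la 1,(\Pi_h v_h)^2\ra_n^E\big|$ by $C h_E$ times integrals of first and second derivatives of $(\Pi_h v_h)^2$ on $E$; since $\Pi_h v_h$ is linear on $\hat E$ (via $F_{\tilde E}$) its second derivative is controlled by curvature terms and its first derivative by $\|\hat D(\Pi_h v_h)\|$. Using the inverse estimate $\|\hat D(\Pi_h v_h)\|_{L^\infty(\hat E)}\le C h_E^{-1}\|\Pi_h v_h\|_{L^\infty(E)}$ and equivalence of norms on the finite-dimensional space $P_1$, this yields $\|\Pi_h v_h\|_{L^2(E)}^2 \le C h_E\,\|\Pi_h v_h\|_{L^2(E)}\,\|\text{tangential data}\|$, hence after summing over $E$ an estimate of the form $\|\Pi_h v_h\|_{1/2,h}^2 \le C\,\|\partial_T(\Pi_h v_h)\|_{-1/2,h}^2$, where $\partial_T$ denotes the tangential derivative along $\Ga_h$.

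Next I would relate the tangential derivative of $\Pi_h v_h$ along the boundary to the full gradient of $v_h$. Recall $(\Pi_h v_h)(F_E(t)) = v_h(F_{\tilde E}(t))$, so $\Pi_h v_h$ is essentially the trace of $v_h$ on the straight edge $\tilde E$ reparametrised by the curved edge $E$; by \eqref{a1} the change of parametrisation has bounded distortion, so $|\partial_T(\Pi_h v_h)|$ on $E$ is comparable to $|\partial_{\tilde T} v_h|$ on $\tilde E$, the tangential derivative of the piecewise-linear function $v_h$ along the polygonal boundary $\Ga_h=\partial\Om_h$. A standard trace/inverse estimate then gives $\sum_{\tilde E} h_{\tilde E}\,\|\partial_{\tilde T} v_h\|_{L^2(\tilde E)}^2 \le C\,\|\nabla v_h\|_{L^2(\Om_h)}^2$: on each boundary triangle $\tilde K$ the tangential derivative on the boundary edge is a constant bounded by $C h_{\tilde K}^{-1}\|\nabla v_h\|_{L^2(\tilde K)}\,h_{\tilde K}^{-1+1}$ — more carefully, $\|\partial_{\tilde T} v_h\|_{L^2(\tilde E)}^2 = |\tilde E|\,|\partial_{\tilde T} v_h|^2 \le C h_{\tilde K}\,|\nabla v_h|_{\tilde K}^2 \le C h_{\tilde K}^{-1}\|\nabla v_h\|_{L^2(\tilde K)}^2$, so multiplying by $h_{\tilde E}\sim h_{\tilde K}$ and summing gives the claim. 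Chaining these inequalities yields \eqref{eq:goal}, and adding $\|\nabla v_h\|_{L^2(\Om_h)}^2$ to both sides proves the lemma with $\al = (1+C)^{-1}$.

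The main obstacle I expect is the first step: carefully converting the discrete orthogonality $\|\Pi_h v_h\|_n=0$ into the continuous bound $\|\Pi_h v_h\|_{L^2(E)}^2 \lesssim h_E \|\Pi_h v_h\|_{L^2(E)}\|\partial_T \Pi_h v_h\|_{L^2(E)}$, because it requires applying the quadrature error estimate of Lemma~\ref{lem:2.1} to the \emph{square} $(\Pi_h v_h)^2$ rather than to $\Pi_h v_h$ itself, keeping track of the boundary terms $\De t_{1,E}$, $\De t_{n_E+1,E}$ (which are $O(h_E/n_E)$ by uniform distribution) and the curvature contributions from $|F_E'|$, and then absorbing one power of $\|\Pi_h v_h\|_{L^2(E)}$ from the right-hand side — all uniformly in $n$ and $h$. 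Everything else (the reparametrisation estimate via \eqref{a1}, the inverse estimates on $P_1$, the elementwise trace inequality) is routine.
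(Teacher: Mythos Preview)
Your proposal is correct and follows essentially the same line as the paper's proof: you take $\mu_h=\Pi_hv_h$ to obtain $\la\Pi_hv_h,\Pi_hv_h\ra_n=0$, apply the quadrature-error Lemma~\ref{lem:2.2} (equivalently Lemma~\ref{lem:2.1} elementwise) to $v=(\Pi_hv_h)^2$ to bound $\|\Pi_hv_h\|_{L^2(\Ga)}^2$ by terms involving the tangential derivative, and then use $(\Pi_hv_h)(F_E(t))=v_h(F_{\tilde E}(t))$ together with an elementwise trace/inverse estimate to control these by $\|\na v_h\|_{L^2(\Om_h)}^2$. The paper organizes the last two steps slightly more directly---it writes the quadrature error bound immediately as $Ch(\|\na v_h\|_{L^2(\Om_h)}^2+Ch^{1/2}\|\na v_h\|_{L^2(\Om_h)}\|\Pi_hv_h\|_{L^2(\Ga)})$ and then absorbs the cross term---rather than passing through the intermediate quantity $\|\partial_T(\Pi_hv_h)\|_{-1/2,h}$, but this is only a cosmetic difference in bookkeeping.
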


\begin{proof}  For simplicity we write $\tilde v_h=\Pi_hv_h\in Q_h$ for any $v_h\in V_h$. Then $\la \tilde v_h,\tilde v_h\ra_n=0$ for any $v_h\in K_h$. By Lemma \ref{lem:2.2} for $v=\tilde v_h^2$ we obtain after some simple computations
\ben
\qquad\ \  |\la \tilde v_h,\tilde v_h\ra-\la \tilde v_h,\tilde v_h\ra_n|\le Ch(\|\na v_h\|_{L^2(\Om_h)}^2+Ch^{1/2}\|\na v_h\|_{L^2(\Om_h)}\|\tilde v_h\|_{L^2(\Ga)}).
\een
Thus 
\ben
\|\tilde v_h\|_{1/2,h}^2\le Ch^{-1}\|\tilde v_h\|_{L^2(\Ga)}^2
&=&Ch^{-1}|\la \tilde v_h,\tilde v_h\ra-\la \tilde v_h,\tilde v_h\ra_n|\\
&\le&C\|\na v_h\|_{L^2(\Om_h)}^2+Ch\|\na v_h\|_{L^2(\Om_h)}\|\tilde v_h\|_{1/2,h}.
\een
This shows $\|\na v_h\|_{L^2(\Om_h)}^2\ge C\|\tilde v_h\|_{1/2,h}^2$ and completes the proof.
\end{proof}

\begin{lemma}\label{lem:2.4}
There exists constants $C_1,C_2>0,h_0>0$ independent of $h, n$ such that for $h\le h_0$,
\ben
C_1\|\mu_h\|_{L^2(\Ga)}\le\|\mu_h\|_n\le C_2\|\mu_h\|_{L^2(\Ga)},\ \ \forall\mu_h\in Q_h.
\een
\end{lemma}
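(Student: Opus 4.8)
The plan is to establish the two-sided bound $\|\mu_h\|_n \sim \|\mu_h\|_{L^2(\Ga)}$ on the finite element space $Q_h$ by working element by element on $\cE_h$ and using a scaling (equivalence of norms on a finite-dimensional reference space) argument, with Lemma \ref{lem:2.2} supplying the quadrature error that must be absorbed.

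First I would reduce everything to the reference element. For $\mu_h\in Q_h$ write $\hat\mu_{h,E}=\mu_h|_E\circ F_E\in P_1(\hat E)$ for each $E\in\cE_h$. By the change of variables \eqref{b2} and the bounds \eqref{a1} on $|F_E'|$, one has $\|\mu_h\|_{L^2(E)}^2\sim h_E\int_0^1|\hat\mu_{h,E}(t)|^2\,dt$, so that $\|\mu_h\|_{L^2(\Ga)}^2\sim \sum_{E\in\cE_h}h_E\|\hat\mu_{h,E}\|_{L^2(\hat E)}^2$. Similarly, since the weights satisfy $\al_j\sim n^{-1}\sim h_E/n_E$ by \eqref{b3} and the quadrature $Q_{\mathbb{T}_E}$ has weights $\omega_{j,E}\sim 1/n_E$, the empirical norm localizes as $\|\mu_h\|_n^2 = \sum_{E\in\cE_h}\sum_{x_j\in\mathbb{T}_E}\al_j\,\hat\mu_{h,E}(t_{j,E})^2 \sim \sum_{E\in\cE_h} h_E\, Q_{\mathbb{T}_E}(|\hat\mu_{h,E}|^2)$ (up to the harmless ambiguity of which element claims a shared vertex). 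So it suffices to show $Q_{\mathbb{T}_E}(|\hat\mu_{h,E}|^2)\sim \|\hat\mu_{h,E}\|_{L^2(\hat E)}^2$ uniformly in $E$ and in the point configuration $\mathbb{T}_E$.

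For this uniform equivalence on $P_1(\hat E)$ I would apply Lemma \ref{lem:2.2} (or directly Lemma \ref{lem:2.1}) with $w=|\hat\mu_{h,E}|^2$, a quadratic polynomial in $t$: the three error terms involve $w''$, $w'$ and $w$, and since $\De t_{1,E},\De t_{n_E+1,E}\le C/n_E$ one gets
\[
\left|\int_0^1 |\hat\mu_{h,E}|^2\,dt - Q_{\mathbb{T}_E}(|\hat\mu_{h,E}|^2)\right| \le \frac{C}{n_E}\,\|\hat\mu_{h,E}\|_{L^2(\hat E)}^2,
\]
where the constant is independent of $E$ and $\mathbb{T}_E$ because all norms on the finite-dimensional space $P_1(\hat E)$ are equivalent with fixed constants. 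Multiplying by $h_E$ and summing over $\cE_h$, this perturbation is bounded by $C n^{-1}\|\mu_h\|_{L^2(\Ga)}^2$, which for $n$ large — equivalently $h$ small, since $n_E\sim nh_E$ and $h_E\sim h$ — is absorbed into a fixed fraction of $\|\mu_h\|_{L^2(\Ga)}^2$, giving the lower bound $\|\mu_h\|_n\ge C_1\|\mu_h\|_{L^2(\Ga)}$; the upper bound follows from the same inequality. There is a threshold $h_0$ below which this absorption works, matching the statement.

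The main obstacle I anticipate is the bookkeeping needed to make the element-by-element reduction of the empirical norm clean: the weights $\al_j$ as defined in \eqref{b2} sum contributions from all elements containing $x_j$, and a measurement point at a vertex of $\cM_h$ on $\Ga$ lies in two elements of $\cE_h$, so one must check that assigning each such point to one element changes the empirical norm only by a bounded factor; this is routine given $\al_j\sim n^{-1}$ but must be handled carefully to keep the constants $C_1,C_2$ independent of $h$ and $n$. A secondary point is that the constant in the reference-element norm equivalence applied to $w=|\hat\mu_{h,E}|^2$ must genuinely not depend on the location of the points $t_{j,E}$; this is clear because the bound produced by Lemma \ref{lem:2.1} is in terms of $\int_0^1|w''|$ and endpoint terms already controlled by $\De t_{1,E},\De t_{n_E+1,E}$, and $w''$ is a constant determined by $\|\hat\mu_{h,E}\|$ alone.
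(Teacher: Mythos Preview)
Your perturbation strategy breaks at the step where you invoke Lemma~\ref{lem:2.1} (or~\ref{lem:2.2}) to claim
\[
\left|\int_0^1 |\hat\mu_{h,E}|^2\,dt - Q_{\mathbb{T}_E}(|\hat\mu_{h,E}|^2)\right| \le \frac{C}{n_E}\,\|\hat\mu_{h,E}\|_{L^2(\hat E)}^2.
\]
The factors $\De t_{1,E},\De t_{n_E+1,E}$ in Lemma~\ref{lem:2.1} sit only in front of the two \emph{endpoint} terms; the leading term $C\int_0^1|w''|\,dt$ carries no such small factor, and the lemma explicitly states that $C$ is independent of $\mathbb{T}_E$. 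With $w=|\hat\mu_{h,E}|^2$ one has $w''=2(\hat\mu_{h,E}')^2$, a constant comparable to $\|\hat\mu_{h,E}\|_{L^2(\hat E)}^2$ on $P_1(\hat E)$, so the bound you actually get is $C\|\hat\mu_{h,E}\|_{L^2(\hat E)}^2$ with an $O(1)$ constant. Summed, this yields $\big|\,\|\mu_h\|_n^2-\|\mu_h\|_{L^2(\Ga)}^2\,\big|\le C\|\mu_h\|_{L^2(\Ga)}^2$, which is exactly what the paper uses for the \emph{upper} inequality but which cannot be absorbed to give the lower one. Lemma~\ref{lem:2.2} leads to the same place: its $h_E|\hat v_E''|$ term, after converting $\|\hat\mu_{h,E}\|_{L^2(\hat E)}^2$ back to $\|\mu_h\|_{L^2(E)}^2$, loses the $h_E$.

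The paper does not attempt a perturbative lower bound. It writes $\|\mu_h\|_n^2=\sum_E Q_{\mathbb{T}_E}(\hat\mu_h^2|F_E'|)$ and compares $Q_{\mathbb{T}_E}$ with the trapezoid rule $\tilde Q_{\mathbb{T}_E}$ of \eqref{b5}: because $\hat\mu_h$ is linear, $\hat\mu_h^2$ is convex on every subinterval, and Jensen gives the \emph{direct} inequality $\tilde Q_{\mathbb{T}_E}(\hat\mu_h^2|F_E'|)\ge C\|\mu_h\|_{L^2(E)}^2$ for an arbitrary partition, with no smallness to absorb. Only the discrepancy $Q_{\mathbb{T}_E}-\tilde Q_{\mathbb{T}_E}$, given exactly by \eqref{b6} and involving just the two boundary subintervals, then needs to be controlled. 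Your route can be repaired either by replacing Lemma~\ref{lem:2.1} with the sharp composite-trapezoid estimate carrying the factor $(\max_j\De t_{j,E})^2\sim n_E^{-2}$ on $\int_0^1|w''|$, or by switching to this convexity argument.
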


\begin{proof} Since $\hat\mu_h(t)=\mu_h(F_E(t))$ is linear in $\hat E$ for any $E\in\cE_h$, we use Lemma \ref{lem:2.2} for $v=\mu_h^2$ to obtain
\ben
|\la\mu_h,\mu_h\ra-\la\mu_h,\mu_h\ra_n|\le C\sum_{E\in\cE_h}\int_{\hat E}h_E|\hat\mu_h|^2dt\le C\|\mu_h\|_{L^2(\Ga)}^2.
\een
This shows the right inequality. Next by definition we have
\be\label{b7}
\la\mu_h,\mu_h\ra_n=\sum_{E\in\cE_h}Q_{\mathbb{T}_E}(\hat\mu_h^2|F_E'|).
\ee
From \eqref{b5} and \eqref{a1} we know that for any $E\in\cE_h$,
\ben
\tilde Q_{\mathbb{T}_E}(\hat\mu_h^2|F_E'|)
&=&\frac 12\sum^{n_E}_{j=0}\int^{t_{j+1}}_{t_j}\left(\hat\mu_h(t_j)^2|F_E'(t_j)|+\hat\mu_h^2(t_{j+1})|F'_E(t_{j+1})|\right)dt\\
&\ge& Ch_E\sum^{n_E}_{j=0}\int^{t_j+1}_{t_j}\left(\hat\mu_h(t_j)^2+\hat\mu_h(t_{j+1})^2\right)dt\\
&\ge&Ch_E\sum^{n_E}_{j=0}\int^{t_j+1}_{t_j}|\hat\mu_h(t)|^2dt,
\een
where in the last inequality we have used the fact that $\hat\mu_h$ is linear in $\hat E$ and the Jensen inequality for convex functions. Thus $|\tilde Q_{\mathbb{T}_E}(\hat\mu_h^2|F_E'|)|\ge C\|\mu_h\|_{L^2(E)}^2,\forall E\in\cE_h$. On the other hand, by \eqref{b6} we have
\ben
|Q_{\mathbb{T}_E}(\hat\mu_h^2|F_E'|)-\tilde Q_{\mathbb{T}_E}(\hat\mu_h^2|F_E'|)|\le Ch_E\|\mu_h\|_{L^2(E)}^2.
\een
Therefore, by \eqref{b7}, $\|\mu_h\|_n\ge C\|\mu_h\|_{L^2(\Ga)}$ for sufficiently small $h$. This completes the proof.
\end{proof}

We have the following inf-sup condition for the empirical inner product.

\begin{lemma}\label{lem:2.5}
There exists a constant $h_0,\beta>0$ independent of $h, n$ such that for $h\le h_0$,
\ben
\sup_{v_h\in V_h\backslash\{0\}}\frac{\la \Pi_hv_h,\mu_h\ra_n}{\|v_h\|_{V_h}}\ge \beta\|\mu_h\|_{Q_h},\ \ \forall \mu_h\in Q_h.
\een
\end{lemma}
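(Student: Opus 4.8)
The plan is to construct, for a given $\mu_h\in Q_h$, an explicit test function $v_h\in V_h$ that realizes the supremum up to a constant. The natural candidate is the function $v_h$ whose boundary nodal values agree with $\mu_h$ at the nodes $y_j$ of $\cM_h$ on $\Ga$ and which is extended into $\Om_h$ in a controlled way. More precisely, I would first note that since $\Pi_h:V_h\to Q_h$ is surjective onto $Q_h$ (it is defined by matching nodal values at the $y_j$, and $Q_h$ is determined by those same nodal values via the reference-element parametrization), there is a discrete extension operator: given $\mu_h\in Q_h$, pick $w_h\in V_h$ with $\Pi_h w_h=\mu_h$ and with the interior degrees of freedom chosen to make $w_h$ a ``discrete harmonic'' or simply a minimal-norm extension, so that $\|\na w_h\|_{L^2(\Om_h)}\le C\|\mu_h\|_{1/2,h}=C\|\Pi_h w_h\|_{1/2,h}$. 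With such a $w_h$ one gets $\|w_h\|_{V_h}\le C\|\mu_h\|_{1/2,h}$.

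Next I would estimate the numerator $\la\Pi_h w_h,\mu_h\ra_n=\la\mu_h,\mu_h\ra_n=\|\mu_h\|_n^2$. By Lemma~\ref{lem:2.4} this is bounded below by $C_1^2\|\mu_h\|_{L^2(\Ga)}^2$. So the Rayleigh quotient for this particular $v_h=w_h$ is at least
\ben
\frac{\la\Pi_hw_h,\mu_h\ra_n}{\|w_h\|_{V_h}}\ge\frac{C_1^2\|\mu_h\|_{L^2(\Ga)}^2}{C\|\mu_h\|_{1/2,h}}.
\een
To finish I need to compare $\|\mu_h\|_{L^2(\Ga)}^2/\|\mu_h\|_{1/2,h}$ with $\|\mu_h\|_{Q_h}=\|\mu_h\|_{-1/2,h}$. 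By the quasi-uniformity $C_1h\le h_E\le C_2h$, one has $\|\mu_h\|_{1/2,h}^2\le Ch^{-1}\|\mu_h\|_{L^2(\Ga)}^2$ and $\|\mu_h\|_{-1/2,h}^2\le Ch\|\mu_h\|_{L^2(\Ga)}^2$, hence $\|\mu_h\|_{L^2(\Ga)}^2/\|\mu_h\|_{1/2,h}\ge Ch^{1/2}\|\mu_h\|_{L^2(\Ga)}\ge C\|\mu_h\|_{-1/2,h}=C\|\mu_h\|_{Q_h}$. Combining the displayed inequalities gives $\sup_{v_h}\la\Pi_hv_h,\mu_h\ra_n/\|v_h\|_{V_h}\ge\beta\|\mu_h\|_{Q_h}$ with $\beta>0$ independent of $h,n$.

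The main obstacle I expect is the construction and norm bound for the discrete extension operator $w_h\mapsto$ (with $\Pi_h w_h=\mu_h$ and $\|\na w_h\|_{L^2(\Om_h)}\le C\|\mu_h\|_{1/2,h}$) with a constant independent of $h$ and $n$. This requires a standard but slightly technical scaling/trace argument on the shape-regular quasi-uniform mesh $\tilde\cM_h$: one builds $w_h$ locally in the one layer of elements touching $\Ga$, setting it equal to the prescribed boundary nodal values on $\Ga$-nodes, zero at all interior nodes, and using the local inverse and trace inequalities on each reference triangle together with the bound $C_1h\le h_E\le C_2h$ to get $\|\na w_h\|_{L^2(\tilde K)}\le Ch^{-1/2}\|w_h\|_{L^2(\partial\tilde K\cap\Ga)}$ summed over boundary elements. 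A subtlety worth flagging is that $\|\mu_h\|_{1/2,h}$ is defined via the curved parametrization $F_E$ while $\Pi_h v_h$ lives on the straight edges $\tilde E$ through the identity $(\Pi_hv_h)(F_E(t))=v_h(F_{\tilde E}(t))$; but since $|F_E'|\sim h_E\sim|F_{\tilde E}'|$ by \eqref{a1}, the $L^2(E)$ and $L^2(\tilde E)$ norms of the corresponding functions are equivalent up to constants, so this causes no real difficulty. Everything else is bookkeeping with the norm equivalences already established in Lemmas~\ref{lem:2.2}--\ref{lem:2.4}.
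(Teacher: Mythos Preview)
Your proposal is correct and follows essentially the same route as the paper: the paper takes $v_h=\sum_{j=1}^J\mu_h(y_j)\psi_j$ (exactly the ``zero at interior nodes, match boundary nodal values'' extension you describe in your obstacle paragraph), obtains $\|v_h\|_{V_h}^2\le C\sum_j|\mu_h(y_j)|^2\le Ch^{-1}\|\mu_h\|_{L^2(\Ga)}^2$ by the local scaling you outline, notes $\Pi_hv_h=\mu_h$ so that $\la\Pi_hv_h,\mu_h\ra_n=\|\mu_h\|_n^2\ge C\|\mu_h\|_{L^2(\Ga)}^2$ by Lemma~\ref{lem:2.4}, and concludes. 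Your initial mention of a discrete-harmonic or minimal-norm extension is unnecessary overhead---the simple nodal extension already gives $\|\na w_h\|_{L^2(\Om_h)}\le C\|\mu_h\|_{1/2,h}$---but the argument is otherwise identical.
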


\begin{proof} The proof follows an idea in \cite{p79} where the inf-sup condition for the bilinear form $\la v_h,\mu_h\ra$ is proved.
Let $y_j, j=1,2,\cdots,J$, be the nodes of the mesh $\cM_h$ on $\Ga$ and denote $\psi_j, j=1,2,\cdots,J$, the corresponding nodal basis function of $V_h$.

For any $\mu_h\in Q_h$, we define $v_h(x)=\sum_{j=1}^J\mu_h(y_j)\psi_j(x)\in V_h$. It is easy to check that
\be\label{b8}
\|v_h\|_{V_h}^2\le C\sum_{j=1}^J|\mu_h(y_j)|^2\le Ch^{-1}\|\mu_h\|_{L^2(\Ga)}^2.
\ee
From the definition of $\Pi_h v_h\in Q_h$ we know that $\Pi_h v_h=\mu_h$ on $\Ga$. Thus by Lemma \ref{lem:2.4},
\ben
\la \Pi_h v_h,\mu_h\ra_n=\|\mu_h\|_n^2\ge C\|\mu_h\|_{L^2(\Ga)}^2.
\een
This completes the proof by using \eqref{b8}.
\end{proof}

By Lemma \ref{lem:2.4} we know that for any $v_h\in V_h$, $\mu_h\in Q_h$
\ben
|\la \Pi_hv_h,\mu_h\ra_n|\le C\|\Pi_hv_h\|_{L^2(\Ga)}\|\mu_h\|_{L^2(\Ga)}\le C\|v_h\|_{V_h}\|\mu_h\|_{Q_h}.
\een
Now by the standard Babu\v{s}ka-Brezzi theory (cf., e.g., \cite[Proposition 5.5.4]{bbf13}) we obtain the following theorem.

\begin{theorem}\label{thm:2.1} There exists a constant $h_0>0$ independent of $h,n$ such that for any $h\le h_0$, the discrete problem \eqref{a4}-\eqref{a5} has a unique solution $(u_h,\lam_h)\in V_h\times Q_h$. Moreover, for any $(u_I,\lam_I)\in V_h\times Q_h$, we have
\ben
\|u_h-u_I\|_{V_h}+\|\lam_h-\lam_I\|_{Q_h}\le C\sum_{i=1}^3M_{ih},
\een
where the errors $M_{1h},M_{2h}, M_{3h}$ are defined by
\ben
& &M_{1h}=\sup_{v_h\in V_h\backslash\{0\}}\frac{|(\na u_I,\na v_h)_{\Om_h}+\la\lam_I,\Pi_hv_h\ra_n-(I_hf,v_h)_{\Om_h}|}{\|v_h\|_{V_h}},\\
& &M_{2h}=\sup_{\mu_h\in Q_h\backslash\{0\}}\frac{|\la\mu_h,\Pi_hu_I-g_0\ra_n|}{\|\mu_h\|_{Q_h}},\ \ 
M_{3h}=\sup_{\mu_h\in Q_h\backslash\{0\}}\frac{|\la\mu_h,e\ra_n|}{\|\mu_h\|_{Q_h}}.
\een
\end{theorem}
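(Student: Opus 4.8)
The plan is to invoke the abstract Babuška–Brezzi theory (the cited Proposition 5.5.4 of \cite{bbf13}) for the saddle-point system \eqref{a4}--\eqref{a5}. The three hypotheses required are: (i) continuity of the bilinear form $a(u_h,v_h)=(\na u_h,\na v_h)_{\Om_h}$ and of $b(v_h,\mu_h)=\la\Pi_h v_h,\mu_h\ra_n$ on $V_h\times Q_h$; (ii) coercivity of $a(\cdot,\cdot)$ on the discrete kernel $K_h$; and (iii) the discrete inf-sup condition for $b(\cdot,\cdot)$. Hypothesis (ii) is exactly Lemma \ref{lem:2.3}, hypothesis (iii) is exactly Lemma \ref{lem:2.5}, and the continuity of $b$ is the displayed estimate just before the theorem statement, $|\la\Pi_h v_h,\mu_h\ra_n|\le C\|v_h\|_{V_h}\|\mu_h\|_{Q_h}$, which follows from Lemma \ref{lem:2.4}; continuity of $a$ is trivial by Cauchy--Schwarz since $\|\na v_h\|_{L^2(\Om_h)}\le\|v_h\|_{V_h}$. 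Since all the constants in these lemmas are independent of $h$ and $n$ (for $h\le h_0$), the abstract theory yields existence and uniqueness of $(u_h,\lam_h)\in V_h\times Q_h$ with $h_0$ independent of $h,n$, giving the first assertion.

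For the quasi-optimality estimate, I would proceed by the standard Strang-type argument. Fix an arbitrary $(u_I,\lam_I)\in V_h\times Q_h$ and set $e_h=u_h-u_I\in V_h$, $\eta_h=\lam_h-\lam_I\in Q_h$. Subtracting the equations \eqref{a4}--\eqref{a5} evaluated on the exact discrete solution from the same equations with $(u_I,\lam_I)$ inserted, one gets for all $(v_h,\mu_h)\in V_h\times Q_h$
\ben
(\na e_h,\na v_h)_{\Om_h}+\la\Pi_h v_h,\eta_h\ra_n&=&-\big[(\na u_I,\na v_h)_{\Om_h}+\la\lam_I,\Pi_h v_h\ra_n-(I_hf,v_h)_{\Om_h}\big],\\
\la\Pi_h e_h,\mu_h\ra_n&=&\la\mu_h,g\ra_n-\la\mu_h,\Pi_h u_I\ra_n.
\een
The right-hand side of the first identity, divided by $\|v_h\|_{V_h}$ and supremized, is $M_{1h}$. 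For the second, write $\la\mu_h,g\ra_n=\la\mu_h,g_0\ra_n+\la\mu_h,e\ra_n$ using $g_i=g_0(x_i)+e_i$ and the definition of the empirical inner product; then the right-hand side splits into $\la\mu_h,g_0-\Pi_h u_I\ra_n$ (contributing $M_{2h}$) and $\la\mu_h,e\ra_n$ (contributing $M_{3h}$). Applying the discrete stability of the saddle-point operator — the same inf-sup/coercivity package as above, which controls $\|e_h\|_{V_h}+\|\eta_h\|_{Q_h}$ by the dual norms of the two residuals — gives $\|e_h\|_{V_h}+\|\eta_h\|_{Q_h}\le C(M_{1h}+M_{2h}+M_{3h})$, which is the claimed bound.

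The only mildly delicate point is to make sure the abstract stability estimate is applied in the right norms: one must verify that the combined bilinear form $\mathcal{A}((u_h,\lam_h),(v_h,\mu_h))=(\na u_h,\na v_h)_{\Om_h}+\la\Pi_h v_h,\lam_h\ra_n+\la\Pi_h u_h,\mu_h\ra_n$ satisfies a global inf-sup condition on $(V_h\times Q_h)^2$ with the product norm, which is precisely what the Babuška--Brezzi theorem delivers from the three hypotheses above; then the residual-form right-hand side is bounded by its dual norm, and one identifies that dual norm with $\sum_i M_{ih}$. No genuine obstacle arises here — the work has all been front-loaded into Lemmas \ref{lem:2.3}--\ref{lem:2.5} — so the proof is essentially a bookkeeping exercise of collecting those lemmas and applying the black-box theorem, together with the elementary decomposition of the data term $\la\mu_h,g\ra_n$ into its exact and noise parts.
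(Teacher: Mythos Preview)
Your proposal is correct and is exactly the approach the paper takes: the paper simply invokes the cited black-box result from \cite{bbf13} after having assembled the continuity of $b$ (the displayed estimate from Lemma~\ref{lem:2.4}), the kernel coercivity (Lemma~\ref{lem:2.3}), and the discrete inf-sup condition (Lemma~\ref{lem:2.5}). Your Strang-type derivation of the error bound, including the splitting $\la\mu_h,g\ra_n=\la\mu_h,g_0\ra_n+\la\mu_h,e\ra_n$, is precisely the content packaged inside that citation.
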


%%%%%%%%%%%%%%%%%%%%%%%%%%%%%%%%%%%%%%%%%%%%%%%%%%%%%%%%%%%%%%%
\section{Convergence of the finite element method}
\setcounter{equation}{0}
%%%%%%%%%%%%%%%%%%%%%%%%%%%%%%%%%%%%%%%%%%%%%%%%%%%%%%%%%%%%%%%

We will use the Lenoir homeomorphism $\Phi_h:\Om_h\to\Om$ \cite{l86}. The mapping $\Phi_h$ is defined elementwise: for
any $\tilde K\in\tilde\cM_h$, $\Phi_h|_{\tilde K}=\Psi_K$ is a $C^2$-diffeomorphism from $\tilde K$ to $K$.  If no edge of $K$ belongs to $\pa\Om_h$, $\Psi_K=I$, the identity. If one edge $\tilde E$ of $\tilde K$ lies on $\pa\Om_h$ which corresponds to the 
curved edge $E$ of $K\in\cM_h$, $\Psi_K$ maps $\tilde E$ onto $E$ and $\Psi_K=I$, the identity, alongs the other two edges
of $\tilde K$. We need the following
properties of $\Psi_K$ from \cite{l86} in the following lemma.

\begin{lemma}\label{lem:3.1}
The following assertions are valid for any $\tilde K\in\tilde\cM_h$ and $K\in\cM_h$. \\
$1^\circ$ The mapping $\Psi_K:\tilde K\to K$ satisfies the following estimates
\ben
\|D^s(\Psi_K-I)\|_{L^\infty(\tilde K)}\le Ch^{2-s}, \ \ \forall s\le 2,\ \ \ \ \sup_{x\in\tilde K}|J(\Psi_K)(x)-1|\le Ch,
\een
where $J(\Psi_K)$ denotes the modulus of the Jacobi determinant of $\Psi_K$. \\
$2^\circ$ The mapping $\Psi_K^{-1}:K\to\tilde K$ satisfies
\ben
\|D^s(\Psi_K^{-1}-I)\|_{L^\infty(K)}\le Ch^{2-s}, \ \ \forall s\le 2,\ \ \ \ \sup_{x\in K}|J(\Psi_K^{-1})(x)-1|\le Ch.
\een
\end{lemma}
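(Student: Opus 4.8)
The two groups of estimates are exactly those proved by Lenoir in \cite{l86}; I would present the argument in three steps.

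\emph{Reduction to a lift on the near-boundary elements.} The map $\Phi_h$ is defined element by element, and on any $\tilde K$ having no edge on $\pa\Om_h$ one has $\Psi_K=I$, so every bound is trivial; it therefore suffices to consider an element $\tilde K$ with one edge $\tilde E$ on $\pa\Om_h$, corresponding to the curved edge $E$ of $K$, and to write $\Psi_K=I+\Th_K$. Following \cite{l86}, $\Th_K$ is a blended extension into $\tilde K$ of the boundary displacement $d$ — the vector measuring how the arc $E$ departs from its chord $\tilde E$ — constructed so that it vanishes on the two straight edges of $\tilde K$ through the opposite vertex $a_3$, equals $d$ on $\tilde E$, and (this being the delicate feature of Lenoir's construction) stays $C^2$ up to $a_3$. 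With such a $\Th_K$, assertion $1^\circ$ reduces to the bounds $\|D^s\Th_K\|_{L^\infty(\tilde K)}\le Ch^{2-s}$ for $s=0,1,2$, and assertion $2^\circ$ will follow from $1^\circ$ by the inverse function theorem.

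\emph{The lift estimate, which is the main obstacle.} Since $\Ga$ is smooth, a Taylor expansion of the arc about its chord shows that when $E$ and $\tilde E$ are parametrized over a common reference interval (say proportionally to arc length) the displacement $d$ and both of its derivatives on that interval are $O(h^2)$ — a chord of a $C^2$ curve deviates from it by $O(h^2)$, with no further loss of powers of $h$ on the fixed parameter interval because the arc has length $\sim h$. Mapping $\tilde K$ affinely onto a fixed reference triangle $\hat K$, with distortion controlled by the shape-regularity in \eqref{a0}, turns the estimate for $\Th_K$ into one for the reference Lenoir extension of $d$; because that reference extension is, up to the affine normalization, a linear operation whose $C^2(\hat K)$ bound is independent both of the element and of $d$, one obtains $C^2$ control of the reference lift by $C\|d\|_{C^2}\le Ch^2$, and undoing the affine map multiplies the $s$-th derivative by $h^{-s}$, giving $\|D^s\Th_K\|_{L^\infty(\tilde K)}\le Ch^{2-s}$. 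Checking that the reference extension genuinely has element-uniform $C^2$ bounds — equivalently, that the blending annihilates the singularity a naive transfinite-interpolation formula would have at $a_3$ — is the technical core of \cite{l86}; in a full write-up I would either quote that estimate or reproduce it.

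\emph{Jacobians and the inverse map.} Since $D\Psi_K=I+D\Th_K$ with $\|D\Th_K\|_{L^\infty(\tilde K)}\le Ch$, expanding the $2\times2$ determinant gives $J(\Psi_K)=1+O(h)$, which is the second bound in $1^\circ$ and, for $h$ small, also shows $\Psi_K$ is a $C^2$-diffeomorphism of $\tilde K$ onto $K$. For $2^\circ$, writing $x=\Psi_K^{-1}(y)$ gives $\Psi_K^{-1}(y)-y=-(\Psi_K(x)-x)$, hence $\|\Psi_K^{-1}-I\|_{L^\infty(K)}=\|\Psi_K-I\|_{L^\infty(\tilde K)}\le Ch^2$; differentiating $\Psi_K^{-1}\circ\Psi_K=I$ gives $D\Psi_K^{-1}\circ\Psi_K=(I+D\Th_K)^{-1}=\sum_{k\ge0}(-D\Th_K)^k$, so $\|D(\Psi_K^{-1}-I)\|_{L^\infty(K)}\le Ch$; one more differentiation expresses $D^2\Psi_K^{-1}$ through $D^2\Th_K$ and the already bounded $D\Psi_K^{-1}$, whence $\|D^2\Psi_K^{-1}\|_{L^\infty(K)}\le C=Ch^{0}$; and $J(\Psi_K^{-1})=1/\bigl(J(\Psi_K)\circ\Psi_K^{-1}\bigr)=1+O(h)$. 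This establishes $2^\circ$ and completes the proof.
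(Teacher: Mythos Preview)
The paper does not prove this lemma at all; it simply quotes the estimates as ``properties of $\Psi_K$ from \cite{l86}'' and moves on. Your proposal goes further and sketches Lenoir's actual argument --- reduction to the near-boundary elements, the $O(h^2)$ chord-arc displacement, the reference-element blending with uniform $C^2$ control, and the Neumann series for the inverse map --- and that sketch is correct. So you are not deviating from the paper's approach; you are supplying what the paper only cites.
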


Let $r_h:L^1(\Om_h)\to V_h$ be the Cl\'ement interplant \cite{c78} which enjoys the following properties
\be
& &|v-r_h v|_{H^j(\tilde K)}\le Ch^{m-j}|v|_{H^m(\Delta_{\tilde K})},\ \ \forall \tilde K\in\tilde\cM_h,0\le j\le m, m=1,2,\label{c1}\\
& &|v-r_h v|_{H^j(e)}\le Ch^{m-j-1/2}|v|_{H^m(\Delta_e)},\ \ \forall e\in\tilde\cE_h,0\le j<m, m=1,2,\label{c2}
\ee
where $\tilde\cE_h$ is the set of all sides of the mesh $\tilde\cM_h$, and for any set $A\subset\Om_h$, $\Delta_{A}$ is the union of the elements surrounding $A$. We remark that \eqref{c1} is proved in \cite{c78} and \eqref{c2} is the consequence of
\eqref{c1} and the following scaled trace inequality
\ben
|v|_{L^2(e)}\le Ch^{-1/2}\|v\|_{L^2(\De_e)}+Ch^{1/2}\|\na v\|_{L^2(\De_e)},\ \ \forall v\in H^1(\Om_h).
\een
We will assume in this section the solution $u\in H^2(\Om)$ and thus $\lam\in H^{1/2}(\Ga)$. By the trace theorem, there exists
a function $\tilde\lam\in H^1(\Om)$ such that $\tilde\lam=\lam$ on $\Ga$ and $\|\tilde\lam\|_{H^1(\Om)}\le C\|\lam\|_{H^{1/2}(\Ga)}$. Now we define the following interpolation operator $R_h:L^2(\Om)\to L^2(\Om)$ 
\ben
R_hv=[r_h(v\circ\Phi_h)]\circ\Phi_h^{-1},\ \ \forall v\in L^2(\Om).
\een
We notice that similar interpolation functions are used in \cite{l86} where the Cl\'ement interpolation operator is replaced by the Lagrangian interpolation operator. The following theorem can be easily proved by using Lemma \ref{lem:3.1} and \eqref{c1}-\eqref{c2}.

\begin{lemma}\label{lem:3.2} For any $v\in H^2(\Om)$, we have $\|v-R_hv\|_{H^j(\Om)}\le Ch^{m-j}\|v\|_{H^m(\Om)}$, $
\|v-R_h v\|_{H^j(\Ga)}\le Ch^{m-j-1/2}\|v\|_{H^m(\Om)}$, $0\le j\le m-1, m=1,2$. 
\end{lemma}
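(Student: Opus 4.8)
The plan is to transport everything to the reference domain $\Om_h$ via $\Phi_h$, apply the Cl\'ement estimates \eqref{c1}--\eqref{c2} on $\tilde\cM_h$, and then control the perturbation introduced by $\Phi_h$ using Lemma \ref{lem:3.1}. Concretely, fix $v\in H^2(\Om)$ and write $\tilde v=v\circ\Phi_h\in H^2(\Om_h)$, so that $R_hv=(r_h\tilde v)\circ\Phi_h^{-1}$ and $v-R_hv=(\tilde v-r_h\tilde v)\circ\Phi_h^{-1}$. I would first record that, because $\Phi_h$ and $\Phi_h^{-1}$ are piecewise $C^2$-diffeomorphisms with Jacobians bounded above and below (Lemma \ref{lem:3.1}, $1^\circ$ and $2^\circ$, which give $\|D^s(\Psi_K-I)\|_{L^\infty}\le Ch^{2-s}$ and $|J(\Psi_K)-1|\le Ch$, hence $J(\Psi_K),J(\Psi_K^{-1})$ are comparable to $1$ for $h$ small), the chain rule yields the elementwise norm equivalences $\|w\circ\Phi_h^{-1}\|_{H^j(K)}\le C\|w\|_{H^j(\tilde K)}$ and $\|\tilde v\|_{H^m(\tilde K)}\le C\|v\|_{H^m(K)}$ for $0\le j\le m\le 2$, with the same statements on the boundary pieces $E\leftrightarrow\tilde E$ using the tangential-derivative estimates in \eqref{a1}. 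These equivalences are where one must be slightly careful since second derivatives of $\Phi_h^{-1}$ only enter with an $O(1)$ bound after division by the mesh size, but Lemma \ref{lem:3.1} is tailored exactly to make this work.

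Granting the transport estimates, the interior bound follows immediately: summing \eqref{c1} over $\tilde K\in\tilde\cM_h$ gives
\ben
\|\tilde v-r_h\tilde v\|_{H^j(\Om_h)}^2=\sum_{\tilde K}|\tilde v-r_h\tilde v|_{H^j(\tilde K)}^2\le Ch^{2(m-j)}\sum_{\tilde K}|\tilde v|_{H^m(\De_{\tilde K})}^2\le Ch^{2(m-j)}\|\tilde v\|_{H^m(\Om_h)}^2,
\een
where the last step uses the bounded-overlap property of the patches $\De_{\tilde K}$ (a consequence of shape regularity and quasi-uniformity \eqref{a0}). Transporting back by $\Phi_h^{-1}$ and using $\|\tilde v\|_{H^m(\Om_h)}\le C\|v\|_{H^m(\Om)}$ gives $\|v-R_hv\|_{H^j(\Om)}\le Ch^{m-j}\|v\|_{H^m(\Om)}$ for $0\le j\le m-1$, $m=1,2$. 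For the boundary estimate one proceeds identically with \eqref{c2}: summing over $e\in\tilde\cE_h$ lying on $\pa\Om_h$ and using bounded overlap of the $\De_e$ yields $\|\tilde v-r_h\tilde v\|_{H^j(\pa\Om_h)}\le Ch^{m-j-1/2}\|\tilde v\|_{H^m(\Om_h)}$, and since $\Phi_h$ maps $\pa\Om_h$ onto $\Ga$ with the derivative control \eqref{a1}, transporting back gives $\|v-R_hv\|_{H^j(\Ga)}\le Ch^{m-j-1/2}\|v\|_{H^m(\Om)}$ for $0\le j\le m-1$, $m=1,2$.

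The main obstacle, and the only place requiring genuine care rather than bookkeeping, is the passage between $H^m$-seminorms under the nonaffine map $\Phi_h$ on the curved elements: unlike a straight affine change of variables, the Jacobian of $\Psi_K$ is not constant, so differentiating $w\circ\Phi_h^{-1}$ twice produces terms involving $D^2\Psi_K^{-1}$ and products of first derivatives, and one has to check these are all controlled with the correct powers of $h$. This is precisely what Lemma \ref{lem:3.1} delivers: the bounds $\|D(\Psi_K^{-1}-I)\|_{L^\infty}\le Ch$ and $\|D^2(\Psi_K^{-1}-I)\|_{L^\infty}\le C$ ensure that the extra terms are \emph{lower order} perturbations of the identity-map change of variables, so no loss in the power of $h$ occurs. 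I would also note that \eqref{c1} and Lemma \ref{lem:3.1} are only valid for $h$ sufficiently small, so the statement is understood for $h\le h_0$; this is consistent with the hypotheses used throughout the section. With these transport estimates in hand the proof is a short computation, which is why the excerpt calls it ``easily proved''.
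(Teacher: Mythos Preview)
Your proposal is correct and follows exactly the route the paper indicates: the paper does not give a detailed proof but simply states that the lemma ``can be easily proved by using Lemma \ref{lem:3.1} and \eqref{c1}--\eqref{c2}'', which is precisely your transport-and-apply-Cl\'ement argument. Your discussion of the chain-rule bookkeeping under the nonaffine map $\Phi_h$ is the appropriate elaboration of what the paper leaves implicit.
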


For any $v_h\in V_h$, we denote $\check v_h=v_h\circ\Phi_h^{-1}$ which is a function defined in $\Om$. 
Let $\Om^*=\cup_{K\in\cM_h^*}K$, where $\cM_h^*$ is the set of all elements having one curved edge. Obviously, $|\Om^*|\le Ch$. By definition $\Phi_h=\Psi_K$ is identity for $K\in\cM_h\backslash \cM_h^*$, it is easy to check by using Lemma \ref{lem:3.1} that (cf. \cite[Lemma 8]{l86}) for any $v_h,w_h\in V_h$,
\be\label{c3}
\quad\ \  |(\na v_h,\na w_h)_{\Om_h}-(\na\check v_h,\na\check w_h)|\le Ch\|\check v_h\|_{H^1(\Om^*)}\|\check w_h\|_{H^1(\Om^*)}.
\ee
Now by the Poinc\'are inequality, it is easy to see that $\|v\|_{L^2(\Om)}\le C\|\na v\|_{L^2(\Om)}+C\|v\|_{1/2,h},\forall v\in H^1(\Om)$. Thus by \eqref{c3}
\ben
\|\check v_h\|_{H^1(\Om)}\le \|\na\check v_h\|_{L^2(\Om)}+C\|\check v_h\|_{1/2,h}\le C\|v_h\|_{V_h}+Ch^{1/2}\|\check v_h\|_{H^1(\Om)},
\een
which implies, for sufficiently small $h$, 
\be\label{c4}
\|\check v_h\|_{H^1(\Om)}\le C\|v_h\|_{V_h},\ \ \forall v_h\in V_h.
\ee

\begin{lemma}\label{lem:3.3} Let $(u,\lam)\in H^2(\Om)\times H^{1/2}(\Ga)$ be the solution of \eqref{p2}-\eqref{p3}. We have
\ben
\|u-u_h\circ\Phi_h^{-1}\|_{H^1(\Om)}+\|\lam-\lam_h\|_{-1/2,h}\le Ch\|u\|_{H^2(\Om)}+\sum_{i=1}^3M_{ih},
\een
where $M_{ih}$, $i=1,2,3$, are defined in Theorem \ref{thm:2.1} with $u_I=r_h(u\circ\Phi_h)\in V_h$ and $\lam_I=R_h\tilde\lam\in Q_h$.
\end{lemma}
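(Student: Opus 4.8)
The plan is to combine the abstract error estimate of Theorem \ref{thm:2.1} with the interpolation estimates of Lemma \ref{lem:3.2} and the geometric perturbation bounds of Lemma \ref{lem:3.1}. Write $u_I=r_h(u\circ\Phi_h)\in V_h$ and $\lam_I=R_h\tilde\lam\in Q_h$. The triangle inequality together with \eqref{c4} gives
\ben
\|u-\check u_h\|_{H^1(\Om)}+\|\lam-\lam_h\|_{-1/2,h}\le \|u-\check u_I\|_{H^1(\Om)}+\|\lam-\lam_I\|_{-1/2,h}+C\|u_h-u_I\|_{V_h}+\|\lam_h-\lam_I\|_{Q_h},
\een
where $\check u_I=u_I\circ\Phi_h^{-1}=R_h u$. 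By Lemma \ref{lem:3.2} (with $m=2$, $j=1$ in the interior and the corresponding trace bound on $\Ga$), the first two terms are bounded by $Ch\|u\|_{H^2(\Om)}$, using also $\|\lam\|_{H^{1/2}(\Ga)}\le C\|u\|_{H^2(\Om)}$ from elliptic regularity and the trace theorem. So it remains to estimate $\sum_{i=1}^3 M_{ih}$, which by Theorem \ref{thm:2.1} controls $\|u_h-u_I\|_{V_h}+\|\lam_h-\lam_I\|_{Q_h}$; the claimed bound follows once we show $M_{2h}$ and the consistency part of $M_{1h}$ are each $\le Ch\|u\|_{H^2(\Om)}$ (the noise term $M_{3h}$ is left untouched, as it appears explicitly in the statement).

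For $M_{1h}$, I would test the continuous equation \eqref{p2} with $\check v_h$ (which lies in $H^1(\Om)$) to get $(\na u,\na\check v_h)+\la\lam,\check v_h\ra=(f,\check v_h)$, subtract, and split the resulting expression into three kinds of terms: (i) $(\na u_I,\na v_h)_{\Om_h}-(\na\check u_I,\na\check v_h)$, a geometric consistency error handled by \eqref{c3} and bounded by $Ch\|\check u_I\|_{H^1(\Om^*)}\|\check v_h\|_{H^1(\Om^*)}\le Ch\|u\|_{H^2(\Om)}\|v_h\|_{V_h}$; (ii) the interpolation error $(\na(\check u_I-u),\na\check v_h)\le Ch\|u\|_{H^2(\Om)}\|v_h\|_{V_h}$ from Lemma \ref{lem:3.2} and \eqref{c4}; (iii) the multiplier/quadrature term $\la\lam_I,\Pi_h v_h\ra_n-\la\lam,\check v_h\ra$, which I would further split as $(\la\lam_I,\Pi_h v_h\ra_n-\la\lam_I,\Pi_h v_h\ra)+(\la\lam_I-\lam,\Pi_h v_h\ra)+\la\lam,\Pi_h v_h-\check v_h\ra$, using Lemma \ref{lem:2.2} for the quadrature error, Lemma \ref{lem:3.2} for $\|\lam-\lam_I\|$, and the fact that $\Pi_h v_h$ and $\check v_h$ agree at the boundary nodes so that $\|\Pi_h v_h-\check v_h\|$ on $\Ga$ is $O(h^{3/2})$-small relative to $\|v_h\|_{V_h}$; there is also the source consistency piece $(f-I_hf,v_h)_{\Om_h}\le Ch^2\|f\|_{H^2(\Om)}\|v_h\|_{L^2(\Om_h)}$. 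For $M_{2h}$, the quantity $\la\mu_h,\Pi_h u_I-g_0\ra_n$ is estimated by inserting $u$: write $\Pi_h u_I-g_0=\Pi_h u_I-u|_\Ga$ on $\Ga$ (since $u=g_0$ there), bound the empirical pairing via Lemma \ref{lem:2.4} by $C\|\Pi_h u_I-u\|_{L^2(\Ga)}\|\mu_h\|_{L^2(\Ga)}$ plus a quadrature correction from Lemma \ref{lem:2.2}, and control $\|\Pi_h u_I-u\|_{L^2(\Ga)}\le\|\Pi_h u_I-\check u_I|_\Ga\|_{L^2(\Ga)}+\|\check u_I-u\|_{L^2(\Ga)}$; the second term is $O(h^{3/2})\|u\|_{H^2(\Om)}$ by Lemma \ref{lem:3.2}, and the first is small because $\Pi_h u_I$ and $\check u_I|_\Ga$ are both determined by the nodal values of $u_I$ on $\Ga$, differing only through the curved-versus-straight parametrization, which is an $O(h^2)$ perturbation by Lemma \ref{lem:3.1}.

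The statement of $\De(u,f,g_0)$ in the introduction also features $\|f\|_{H^2(\Om)}$ and $\|g_0\|_{H^2(\Ga)}$, but in this lemma these are subsumed: $\|f\|_{H^2(\Om)}$ enters only through $M_{1h}$ at order $h^2$, and $\|g_0\|_{H^2(\Ga)}=\|u|_\Ga\|_{H^2(\Ga)}\le C\|u\|_{H^2(\Om)}$ by the trace theorem, so writing the bound purely in terms of $\|u\|_{H^2(\Om)}$ is legitimate. The main obstacle, and the place demanding the most care, is bookkeeping the various quadrature and curved-geometry perturbations in item (iii) of $M_{1h}$ and in $M_{2h}$: one must verify that replacing $\la\cdot,\cdot\ra$ by $\la\cdot,\cdot\ra_n$ (Lemma \ref{lem:2.2}), replacing $\Om$-integrals by $\Om_h$-integrals (Lemma \ref{lem:3.1}, \eqref{c3}), and replacing $\check v_h$ by $\Pi_h v_h$ on $\Ga$ all produce errors that are $O(h)$ relative to $\|v_h\|_{V_h}$ or $\|\mu_h\|_{Q_h}$ — in particular that no term of order $h^{1/2}$ survives after using that $\Pi_h v_h$ and $\check v_h$ share boundary nodal values and that $\lam\in H^{1/2}(\Ga)$ rather than merely $L^2(\Ga)$. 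Once these perturbation estimates are assembled, the lemma follows by feeding $M_{1h}+M_{2h}\le Ch\|u\|_{H^2(\Om)}$ into Theorem \ref{thm:2.1} and invoking the triangle inequality above.
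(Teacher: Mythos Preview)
Your first paragraph is essentially the complete proof, and it matches the paper's argument: apply the triangle inequality, use $\check u_I=R_hu$ together with Lemma~\ref{lem:3.2} to bound the interpolation pieces $\|u-\check u_I\|_{H^1(\Om)}$ and $\|\lam-\lam_I\|_{-1/2,h}$ by $Ch\|u\|_{H^2(\Om)}$, and invoke \eqref{c4} and Theorem~\ref{thm:2.1} to control $\|u_h-u_I\|_{V_h}+\|\lam_h-\lam_I\|_{Q_h}$ by $C\sum_{i=1}^3 M_{ih}$. That is all the lemma requires.

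You have, however, misread the statement. The right-hand side of the claimed inequality is $Ch\|u\|_{H^2(\Om)}+\sum_{i=1}^3 M_{ih}$: all three $M_{ih}$ appear explicitly and do \emph{not} need to be bounded here. Your second and third paragraphs are therefore superfluous for Lemma~\ref{lem:3.3}; they are rough sketches of the separate Lemmas~\ref{lem:3.4} and~\ref{lem:3.5}. A few remarks on that extra material, for when you get there: (i) the paper has $\Pi_h v_h=\check v_h$ on $\Ga$ exactly, so there is nothing to estimate in the difference; (ii) the paper's bound on $M_{1h}$ carries a factor $|\ln h|^{1/2}$ and depends on $\|f\|_{H^2(\Om)}$, not only on $\|u\|_{H^2(\Om)}$, so your claim that $M_{1h}\le Ch\|u\|_{H^2(\Om)}$ is too optimistic; (iii) the assertion $\|g_0\|_{H^2(\Ga)}\le C\|u\|_{H^2(\Om)}$ via the trace theorem is false---the trace of $H^2(\Om)$ lands in $H^{3/2}(\Ga)$, not $H^2(\Ga)$.
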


\begin{proof} We first observe that by Lemma \ref{lem:3.2}
\ben
\qquad\|\lam-\lam_I\|_{L^2(\Ga)}\le Ch^{1/2}\|\tilde\lam\|_{H^1(\Om)}\le Ch^{1/2}\|\lam\|_{H^{1/2}(\Ga)}\le Ch^{1/2}\|u\|_{H^2(\Om)}.
\een
Notice that $\check u_h=R_h u$, we obtain by Lemma \ref{lem:3.2}, \eqref{c4}, and Theorem \ref{thm:2.1} that
\ben
& &\|u-u_h\circ\Phi_h^{-1}\|_{H^1(\Om)}+\|\lam-\lam_h\|_{-1/2,h}\\
&\le&\|u-R_hu\|_{H^1(\Om)}+\|\lam-R_h\tilde\lam\|_{-1/2,h}+C(\|u_h-u_I\|_{V_h}+\|\lam_h-\lam_I\|_{Q_h})\\
&\le&Ch\|u\|_{H^2(\Om)}+C\sum_{i=1}^3 M_{ih}.
\een
This completes the proof.
\end{proof}

\begin{lemma}\label{lem:3.4}
We have $M_{1h}\le Ch|\ln h|^{1/2}(\|u\|_{H^2(\Om)}+\|f\|_{H^2(\Om)})$.
\end{lemma}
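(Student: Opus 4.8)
The plan is to estimate $M_{1h}$ by rewriting the numerator using the continuous weak formulation \eqref{p2}, which holds for the test function $\check v_h = v_h\circ\Phi_h^{-1}\in H^1(\Om)$ (valid since $\Phi_h$ maps $\Om_h$ onto $\Om$ and $\check v_h$ is piecewise smooth across element boundaries, hence in $H^1$). From \eqref{p2} we have $(\na u,\na\check v_h)+\la\lam,\check v_h\ra=(f,\check v_h)$, so I can insert this identity and split $M_{1h}$ into four groups of terms: (i) the consistency error between $(\na u_I,\na v_h)_{\Om_h}$ and $(\na u,\na\check v_h)$; (ii) the Lagrange-multiplier consistency error between $\la\lam_I,\Pi_h v_h\ra_n$ and $\la\lam,\check v_h\ra$; (iii) the source term error between $(I_h f,v_h)_{\Om_h}$ and $(f,\check v_h)$; and (iv) the geometric distortion error from $(\na\check u_I,\na\check v_h)$ versus $(\na u_I,\na v_h)_{\Om_h}$, which by \eqref{c3} is bounded by $Ch\|\check u_I\|_{H^1(\Om^*)}\|\check v_h\|_{H^1(\Om^*)}$.

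For group (i), I would write $\na u_I - \na u$ on $\Om_h$ after pulling back via $\Phi_h$, using $\check u_I = R_h u$ and the approximation estimate $\|u-R_h u\|_{H^1(\Om)}\le Ch\|u\|_{H^2(\Om)}$ from Lemma \ref{lem:3.2}, together with \eqref{c4} to control $\|v_h\|_{V_h}$; this contributes $Ch\|u\|_{H^2(\Om)}\|v_h\|_{V_h}$. For group (iii), the standard Lagrange interpolation estimate gives $\|f-I_h f\|_{L^2(\Om_h)}\le Ch^2\|f\|_{H^2(\Om)}$ plus the geometric error from integrating over $\Om_h$ rather than $\Om$, which is $O(h)$ on the strip $\Om^*$ of width $O(h)$; combined this is well below $Ch$. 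The delicate part is group (ii): I must compare the empirical/quadrature pairing $\la\lam_I,\Pi_h v_h\ra_n$ against the exact duality $\la\lam,\check v_h\ra$. I would first replace $\la\cdot,\cdot\ra_n$ by $\la\cdot,\cdot\ra$ using Lemma \ref{lem:2.2} applied to $v=\lam_I\cdot\Pi_h v_h$ (both factors are piecewise smooth on $\cE_h$), then replace $\lam_I$ by $\lam$ and $\Pi_h v_h$ by $\check v_h$ on $\Ga$ using the trace estimates in Lemma \ref{lem:3.2} and the fact that $\Pi_h v_h$ agrees with $\check v_h$ at the nodes. Tracking the $h_E^{-1}$, $h_E$ weights in the mesh-dependent norms $\|\cdot\|_{1/2,h}$, $\|\cdot\|_{-1/2,h}$ carefully, the quadrature error will be $O(h^2)$ in terms of the $Q_h$- and $V_h$-norms, and the interpolation error $O(h)$; the appearance of the $|\ln h|^{1/2}$ factor comes from the $L^\infty$–$L^2$ inverse inequality (or a discrete Sobolev embedding in 2D) needed to absorb the lower-order terms $h_E^2|\hat v'_E|$ and $h_E^3|\hat v_E|$ in Lemma \ref{lem:2.2} when $\hat v_E$ involves the derivative of $\check v_h$, which is only in $L^2$, not $L^\infty$, uniformly.

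The main obstacle I anticipate is precisely this borderline estimate in group (ii): the quadrature error bound from Lemma \ref{lem:2.2} is stated in $W^{2,1}$, and the product $\lam_I\,\Pi_h v_h$ has derivatives controlled only in $L^2$ (through $\|v_h\|_{V_h}$), so converting the $L^1$-type bound into an $L^2$-type bound forces a Cauchy–Schwarz step that loses a factor $h_E^{-1/2}$ per element, and summing over the $O(h^{-1})$ boundary elements while using the quasi-uniformity \eqref{a0} and the inverse/trace inequalities produces the logarithmic loss $|\ln h|^{1/2}$ rather than a clean power of $h$. I would handle this by using the global $L^\infty$ bound $\|\na\check v_h\|_{L^\infty}\le C|\ln h|^{1/2}\|\na\check v_h\|_{L^2}$ (valid for piecewise-linear functions in 2D) on the terms that genuinely need pointwise control, and keeping the $L^2$-norm elsewhere; optimizing over which terms get the logarithm gives the stated $Ch|\ln h|^{1/2}(\|u\|_{H^2(\Om)}+\|f\|_{H^2(\Om)})$. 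Collecting groups (i)–(iv) and dividing by $\|v_h\|_{V_h}$ completes the proof.
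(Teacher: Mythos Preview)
Your decomposition into groups (i)--(iv) via the continuous equation \eqref{p2} is exactly the paper's strategy, and the bounds you propose for (i) and (iv) match. However, you have mislocated the $|\ln h|^{1/2}$ factor, and this reflects two genuine gaps in the argument.

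First, you overlooked that $\Pi_h v_h=\check v_h$ on $\Ga$ (this is how $\Pi_h$ is defined: $(\Pi_h v_h)\circ F_E=v_h\circ F_{\tilde E}$, and the Lenoir map on the boundary sends $\tilde E$ to $E$ via $F_E\circ F_{\tilde E}^{-1}$). So there is no ``replace $\Pi_h v_h$ by $\check v_h$'' step in group~(ii). The paper then handles the quadrature error in (ii) by the simple product estimate \eqref{g1}, obtained from Lemma~\ref{lem:2.2}, which gives a clean $Ch\|u\|_{H^2(\Om)}\|v_h\|_{V_h}$ with no logarithm and no inverse inequality. Your anticipated ``borderline'' step with $\|\na\check v_h\|_{L^\infty}\le C|\ln h|^{1/2}\|\na\check v_h\|_{L^2}$ is not needed and is not where the log comes from.

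Second, the logarithm actually arises in group~(iii), which you dismissed as ``well below $Ch$''. The point is that $(f,\check v_h)-(I_hf,v_h)_{\Om_h}$ is supported on the curved-element strip $\Om^*$, and after using Lemma~\ref{lem:3.1} one is left with $Ch\,\|f\|_{L^2(\Om^*)}\|\check v_h\|_{L^2(\Om^*)}$. The factor $\|f\|_{L^2(\Om^*)}\le Ch^{1/2}\|f\|_{H^2(\Om)}$ is easy, but $\check v_h$ is only in $H^1$, and the paper controls $\|\check v_h\|_{L^2(\Om^*)}$ via the 2D embedding $\|v\|_{L^p(\Om)}\le Cp^{1/2}\|v\|_{H^1(\Om)}$ with $p=|\ln h|$, yielding \eqref{g2}: $\|\check v_h\|_{L^2(\Om^*)}\le Ch^{1/2}|\ln h|^{1/2}\|\check v_h\|_{H^1(\Om)}$. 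This is the true source of the $|\ln h|^{1/2}$ in \eqref{c5}, and the resulting sharp bound $Ch^2|\ln h|^{1/2}$ on the source term is reused later in the duality argument for Theorem~\ref{thm:3.2}.
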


\begin{proof} We first note that by \eqref{p2} we have
\ben
(\na u,\na\check v_h)+\la\lam,\check v_h\ra=(f,\check v_h),\ \ \forall v_h\in V_h.
\een
Now since $\Pi_h v_h=\check v_h$ on $\Ga$, for any $v_h\in V_h$, we have
\ben
& &|(\na u_I,\na v_h)_{\Om_h}+\la\lam_I,\Pi_hv_h\ra_n-(I_hf,v_h)_{\Om_h}|\\
&\le&|(f,\check v_h)-(I_hf,v_h)_{\Om_h}|+|(\na u_I,\na v_h)_{\Om_h}-(\na u,\na\check v_h)|+
|\la\lam,\check v_h\ra-\la\lam_I,\check v_h\ra_n|.
\een
Since $\Phi_h=\Psi_K$ is identity for $K\in\cM_h\backslash\cM_h^*$, we have
\ben
(f,\check v_h)-(I_hf,v_h)_{\Om_h}=\sum_{K\in\cM_h^*}\int_{\tilde K}((f\circ\Psi_K)v_hJ(\Psi_K)-I_h(f\circ\Psi_K) v_h)dx,
\een
which implies by using Lemma \ref{lem:3.1} that 
\ben
& &|(f,\check v_h)-(I_hf,v_h)_{\Om_h}|\\
&\le&Ch\|f\|_{L^2(\Om^*)}\|\check v_h\|_{L^2(\Om^*)}+Ch^2\|f\|_{H^2(\Om)}\|v_h\|_{L^2(\Om)}.
\een
Obviously, $\|f\|_{L^2(\Om^*)}\le Ch^{1/2}\|f\|_{L^\infty(\Om)}\le Ch^{1/2}\|f\|_{H^2(\Om)}$. Moreover, by the well-known embedding theorem \cite{r94} 
\ben
\|v\|_{L^p(\Om)}\le Cp^{1/2}\|v\|_{H^1(\Om)},\forall v\in H^1(\Om), \forall p>2,
\een 
we have
\ben
\|v\|_{L^2(\Om^*)}\le C|\Om^*|^{\frac 12-\frac 1p}p^{1/2}\|v\|_{H^1(\Om)}\le Ch^{\frac 12-\frac 1p}p^{1/2}\|v\|_{H^1(\Om)}.
\een
By taking $p=\ln (h^{-1})$ we obtain then 
\be\label{g2}
\|v\|_{L^2(\Om^*)}\le Ch^{1/2}|\ln h|^{1/2}\|v\|_{H^1(\Om)},\ \ \forall v\in H^1(\Om).
\ee
This implies 
\be\label{c5}
|(f,\check v_h)-(I_hf,v_h)_{\Om_h}|\le Ch^2|\ln h|^{1/2}\|f\|_{H^2(\Om)}\|v_h\|_{V_h}.
\ee
By Lemma \ref{lem:3.2}, \eqref{c3} and \eqref{c4} we have
\be\label{c6}
& &|(\na u_I,\na v_h)_{\Om_h}-(\na u,\na\check v_h)|\\
&\le&|(\na u_I,\na v_h)_{\Om_h}-
(\na\check u_I,\na\check v_h)|+|(\na (u-\check u_I),\na\check v_h)|\nn\\
&\le&Ch\|u\|_{H^2(\Om)}\|v_h\|_{V_h}.\nn
\ee
By using Lemma \ref{lem:2.2} one can prove
\be\label{g1}
|\la\check v_h,\check w_h\ra_n-\la\check v_h,\check w_h\ra|\le Ch\|v_h\|_{H^1(\Om_h)}\|w_h\|_{H^1(\Om_h)},\ \ \forall v_h,w_h\in V_h.
\ee
Thus
\ben
|\la\lam_I,\check v_h\ra_n-\la\lam_I,\check v_h\ra|&\le&Ch\|r_h(\tilde\lam\circ\Phi_h)\|_{H^1(\Om_h)}\|v_h\|_{H^1(\Om_h)}\\
&\le&Ch\|u\|_{H^2(\Om)}\|v_h\|_{V_h},
\een
which implies by using Lemma \ref{lem:3.2} that
\be\label{c7}
|\la\lam,\check v_h\ra-\la\lam_I,\check v_h\ra_n|\le Ch\|u\|_{H^2(\Om)}\|v_h\|_{V_h}.
\ee
The estimate for $M_{1h}$ now follows from \eqref{c5}, \eqref{c6} and \eqref{c7}. 
\end{proof}

\begin{lemma}\label{lem:3.5}
We have $M_{2h}\le Ch\|u\|_{H^2(\Om)}$.
\end{lemma}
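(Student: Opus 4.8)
The plan is to estimate $M_{2h}=\sup_{\mu_h\in Q_h\backslash\{0\}}|\la\mu_h,\Pi_hu_I-g_0\ra_n|/\|\mu_h\|_{Q_h}$, where $u_I=r_h(u\circ\Phi_h)$. The key observation is that on the boundary $\Ga$ the true solution satisfies $u=g_0$, so $\Pi_h u_I - g_0 = \Pi_h u_I - u|_\Ga$ on $\Ga$. Since $\Pi_h u_I$ is obtained from $u_I$ by matching nodal values on $\Ga$, and on $\Ga$ one has $\check u_I = u_I\circ\Phi_h^{-1} = R_h u$, I would first relate $\Pi_h u_I$ to the trace of $R_h u$: by the construction of $\Pi_h$ after \eqref{b1}, $(\Pi_h u_I)(F_E(t)) = u_I(F_{\tilde E}(t))$, so $\Pi_h u_I$ is essentially the trace on $\Ga$ of $R_h u$ pulled through the boundary parametrization. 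The upshot is that $\|\Pi_h u_I - g_0\|_{L^2(\Ga)}$ is controlled by $\|R_h u - u\|_{L^2(\Ga)}$ plus a geometric perturbation term coming from the difference between evaluating at $F_E(t)$ and $F_{\tilde E}(t)$, which by Lemma \ref{lem:3.1} is $O(h^2)$ in the relevant norm.

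Next I would dominate the empirical pairing by the $L^2(\Ga)$ pairing. Writing $\la\mu_h,w\ra_n$ with $w=\Pi_h u_I - g_0$, I would use Lemma \ref{lem:2.4} (or rather its proof, the comparison of $\la\cdot,\cdot\ra_n$ with $\la\cdot,\cdot\ra$ via the quadrature error in Lemma \ref{lem:2.2}) together with Cauchy--Schwarz to get
\ben
|\la\mu_h,\Pi_hu_I-g_0\ra_n|\le C\|\mu_h\|_n\,\|\Pi_hu_I-g_0\|_n + (\text{quadrature error}).
\een
Since $w$ vanishes at the mesh nodes $y_j$ on $\Ga$ (because $\Pi_h u_I$ interpolates $u_I$, which equals $u=g_0$ there — more precisely $u_I(y_j)=(u\circ\Phi_h)(y_j)=u(y_j)=g_0(y_j)$ as $\Phi_h$ is the identity at vertices), one expects $\|w\|_n\le C h^{-1/2}\|w\|_{L^2(\Ga)}$-type control, hence by Lemma \ref{lem:3.2} with $m=2,j=0$,
\ben
\|\Pi_hu_I-g_0\|_{L^2(\Ga)}\le Ch^{3/2}\|u\|_{H^2(\Om)}.
\een
Combining with $\|\mu_h\|_n\le C\|\mu_h\|_{L^2(\Ga)}\le Ch^{-1/2}\|\mu_h\|_{Q_h}$ (from Lemma \ref{lem:2.4} and $\|\mu_h\|_{Q_h}=\|\mu_h\|_{-1/2,h}\ge Ch^{1/2}\|\mu_h\|_{L^2(\Ga)}$... wait, the inequality goes the wrong way, so instead I would argue elementwise: $\|\mu_h\|_{L^2(E)}\le C h_E^{-1/2}\|\mu_h\|_{-1/2,h}$ is false too, so the right route is to pair $E$ by $E$, using $\|\mu_h\|_{L^2(E)}\,\|w\|_{L^2(E)}\le (h_E\|\mu_h\|_{L^2(E)}^2)^{1/2}(h_E^{-1}\|w\|_{L^2(E)}^2)^{1/2}$, i.e. $|\la\mu_h,w\ra|\le\|\mu_h\|_{-1/2,h}\|w\|_{1/2,h}$), one gets $M_{2h}\le C\|w\|_{1/2,h}\le Ch^{-1}\cdot\|w\|_{L^2(\Ga)}$ up to the mesh structure, and by Lemma \ref{lem:3.2} $\|w\|_{1/2,h}\le Ch^{1/2}\cdot h\|u\|_{H^2(\Om)}\cdot h^{-1/2}$... so the target bound $Ch\|u\|_{H^2(\Om)}$ follows provided the trace estimate gives one extra half power, which it does: $\|v-R_hv\|_{L^2(\Ga)}\le Ch^{3/2}\|v\|_{H^2(\Om)}$ and the $1/2,h$-norm costs an $h^{-1/2}$, netting $Ch\|u\|_{H^2(\Om)}$.

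The main obstacle I anticipate is bookkeeping the geometric perturbation: $\Pi_h u_I - g_0$ is not literally $R_h u - u$ restricted to $\Ga$, because $\Pi_h u_I$ is built from the straight-edge interpolant (values at $F_{\tilde E}(t)$) while $g_0$ lives on the curved boundary. I would handle this by inserting and subtracting the trace of $\check u_I = R_h u$ on $\Ga$: the difference $\Pi_h u_I - (R_h u)|_\Ga$ measures the discrepancy between $F_{\tilde E}$ and $F_E$, which by Lemma \ref{lem:3.1} ($\|D^s(\Psi_K-I)\|_{L^\infty}\le Ch^{2-s}$) contributes at most $Ch^{2}\|u_I\|$-type terms in $L^2(E)$, hence is lower order. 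The remaining piece $(R_h u - u)|_\Ga$ is controlled directly by Lemma \ref{lem:3.2}. After pairing with $\mu_h$ via the $\|\cdot\|_{-1/2,h}$/$\|\cdot\|_{1/2,h}$ duality and absorbing the quadrature error from Lemma \ref{lem:2.2} exactly as in the proof of Lemma \ref{lem:2.4}, the bound $M_{2h}\le Ch\|u\|_{H^2(\Om)}$ follows.
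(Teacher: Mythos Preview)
Your overall strategy is the same as the paper's: write $\Pi_h u_I - g_0$ as an interpolation error on $\Ga$, control the empirical pairing $\la\mu_h,\cdot\ra_n$ via a quadrature-error comparison to the continuous pairing, and finish with Lemma~\ref{lem:3.2} plus the duality $\|\mu_h\|_{L^2(\Ga)}\le Ch^{-1/2}\|\mu_h\|_{Q_h}$. Two points are worth cleaning up.

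First, the ``geometric perturbation'' you flag as the main obstacle is not there. From the definition of $\Pi_h$ one has $(\Pi_h v_h)(F_E(t))=v_h(F_{\tilde E}(t))$, and since $\Phi_h|_{\tilde E}=F_E\circ F_{\tilde E}^{-1}$ this says exactly $\Pi_h v_h=\check v_h$ on $\Ga$ (this identity is used verbatim in the proof of Lemma~\ref{lem:3.4}). Hence $\Pi_h u_I - g_0 = \check u_I - u = R_hu - u$ on $\Ga$ with no correction term, and your splitting $\Pi_h u_I - (R_hu)|_\Ga$ is identically zero.

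Second, and more substantively, the paper does \emph{not} use Lemma~\ref{lem:2.2} for the quadrature error here. Instead it first extracts from the proof of Lemma~\ref{lem:2.1} the first-order bound
\[
\Big|\int_0^1 w\,dt - Q_{\mathbb T_E}(w)\Big|\le C\|w'\|_{L^2(\hat E)},\qquad w\in H^1(\hat E),
\]
and applies it to $w=\hat\mu_h\hat\vp_E|F_E'|$ with $\vp=u-\check u_I$. The point is that this only requires $\vp\in H^1(\Ga)$, and Lemma~\ref{lem:3.2} supplies $\|u-R_hu\|_{L^2(\Ga)}\le Ch^{3/2}\|u\|_{H^2(\Om)}$ and $|u-R_hu|_{H^1(\Ga)}\le Ch^{1/2}\|u\|_{H^2(\Om)}$, giving $|\la\mu_h,\vp\ra-\la\mu_h,\vp\ra_n|\le Ch^{3/2}\|u\|_{H^2(\Om)}\|\mu_h\|_{L^2(\Ga)}$. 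If instead you use Lemma~\ref{lem:2.2} as you propose, the $\hat v''$ term forces second tangential derivatives of $\vp$; since $(\check u_I\circ F_E)''=0$ this reduces to $(g_0\circ F_E)''$, so you pick up $\|g_0\|_{H^2(\Ga)}$ on the right-hand side. That quantity is assumed finite in the paper but is \emph{not} bounded by $\|u\|_{H^2(\Om)}$ (the trace is only $H^{3/2}(\Ga)$), so you would prove a slightly weaker statement than Lemma~\ref{lem:3.5} as written. The first-order quadrature bound is the device that keeps the estimate in terms of $\|u\|_{H^2(\Om)}$ alone.
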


\begin{proof} We first we observe that the argument in the proof of Lemma \ref{lem:2.1} implies that
\ben
\left|\int^1_0w(t)dt-Q_{\mathbb{T}_E}(w)\right|\le C\|w'\|_{L^2(\hat E)},\ \ \forall w\in H^1(\hat E).
\een
For any $v\in H^1(\Ga)$, by taking $w(t)=\hat v_E(t)|F_E'(t)|$ in each element $E\in\cE_h$, where $\hat v_E(t)=v|_E(F_E(t))$,
we know that 
\ben
|\la 1,v\ra-\la 1,v\ra_n|\le C\sum_{E\in\cE_h}(h_E\|\hat v_E'\|_{L^2(\hat E)}+h_E^2\|\hat v_E\|_{L^2(\hat E)}).
\een
We use the above inequality for $v=\mu_h\vp$, where $\vp=u-\check u_I$ in $\Ga$, to obtain
\ben
|\la\mu_h,\vp\ra-\la\mu_h,\vp\ra_n|\le C\sum_{E\in\cE_h}\|\hat\mu_h\|_{L^2(\hat E)}(h_E\|\hat\vp_E\|_{L^2(\hat E)}+h^2_E\|\hat\vp_E'\|_{L^2(\hat E)}),
\een
where we have used the fact $\|\hat\mu_h\|_{W^{1,\infty}(\hat E)}\le C\|\hat\mu_h\|_{L^2(\hat E)}$ since $\hat\mu_h\in P_1(\hat E)$. This implies by using Lemma \ref{lem:3.2} again
\ben
& &|\la\mu_h,u-\check u_I\ra-\la\mu_h,u-\check u_I\ra_n|\\
&\le&C\|\mu_h\|_{L^2(\Ga)}(\|u-R_hu\|_{L^2(\Ga)}+h|u-R_h u|_{H^1(\Ga)})\\
&\le&Ch^{3/2}\|u\|_{H^2(\Om)}\|\mu_h\|_{L^2(\Ga)}.
\een
This completes the proof.
\end{proof}

The following theorem shows the convergence of the finite element solution in the sense of expectation.

\begin{theorem}\label{thm:3.1} We have
\ben
& &\mathbb{E}\left[\|u-u_h\circ\Phi_h^{-1}\|_{H^1(\Om)}+h^{1/2}\|\lam-\lam_h\|_{L^2(\Ga)}\right]\\
&\le&Ch|\ln h|^{1/2}(\|u\|_{H^2(\Om)}+\|f\|_{H^2(\Om)})+Ch^{-1}(\sigma n^{-1/2}).
\een
\end{theorem}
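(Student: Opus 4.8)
The plan is to bound the left–hand side, pathwise, by the deterministic quantity $Ch|\ln h|^{1/2}(\|u\|_{H^2(\Om)}+\|f\|_{H^2(\Om)})$ plus a constant multiple of the random term $M_{3h}$ from Theorem \ref{thm:2.1}, and then to estimate $\E[M_{3h}]$. First I would record the elementary norm equivalences on the boundary mesh: since $\cE_h$ is quasi-uniform, $h_E\ge C_1h$ for every $E\in\cE_h$, and hence $h^{1/2}\|\mu\|_{L^2(\Ga)}\le C\|\mu\|_{-1/2,h}$ for all $\mu\in L^2(\Ga)$. Applying this with $\mu=\lam-\lam_h$ and invoking Lemma \ref{lem:3.3} (with $u_I=r_h(u\circ\Phi_h)$ and $\lam_I=R_h\tilde\lam$) together with Lemmas \ref{lem:3.4} and \ref{lem:3.5}, and using $h\le h|\ln h|^{1/2}$ for small $h$, one gets
\ben
\|u-u_h\circ\Phi_h^{-1}\|_{H^1(\Om)}+h^{1/2}\|\lam-\lam_h\|_{L^2(\Ga)}\le Ch|\ln h|^{1/2}(\|u\|_{H^2(\Om)}+\|f\|_{H^2(\Om)})+CM_{3h}.
\een
Since $M_{1h}$ and $M_{2h}$ depend only on $u,f,g_0$ and the mesh (not on the noise $e$), taking expectations reduces the theorem to proving $\E[M_{3h}]\le Ch^{-1}(\sigma n^{-1/2})$.

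Next I would pass from the $Q_h$-norm to the empirical norm and then to coordinates. Using $h_E\ge C_1h$ once more and then Lemma \ref{lem:2.4}, $\|\mu_h\|_{Q_h}=\|\mu_h\|_{-1/2,h}\ge Ch^{1/2}\|\mu_h\|_n$ for all $\mu_h\in Q_h$, so $M_{3h}\le Ch^{-1/2}\sup_{\mu_h\in Q_h\setminus\{0\}}|\la\mu_h,e\ra_n|/\|\mu_h\|_n$. By Lemma \ref{lem:2.4} the empirical seminorm is in fact a genuine norm on the finite-dimensional space $Q_h$, induced by the inner product $\la\cdot,\cdot\ra_n$; let $\{\phi_k\}_{k=1}^N$ be a $\la\cdot,\cdot\ra_n$-orthonormal basis of $Q_h$, where $N=\dim Q_h$ equals the number of nodes of $\cE_h$, so that $N\le Ch^{-1}$ with a bound independent of $n$. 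Expanding $\mu_h=\sum_k c_k\phi_k$ so that $\|\mu_h\|_n^2=\sum_k c_k^2$, the Cauchy--Schwarz inequality gives $\sup_{\mu_h\ne 0}|\la\mu_h,e\ra_n|/\|\mu_h\|_n=(\sum_{k=1}^N|\la\phi_k,e\ra_n|^2)^{1/2}$, hence
\ben
M_{3h}\le Ch^{-1/2}\Big(\sum_{k=1}^N|\la\phi_k,e\ra_n|^2\Big)^{1/2}.
\een

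It then remains to bound the expectation of the right–hand side. Since $\la\phi_k,e\ra_n=\sum_{i=1}^n\al_i\phi_k(x_i)e_i$, independence of the $e_i$ with $\E[e_i]=0$ and $\E[e_i^2]=\sigma$ gives $\E[|\la\phi_k,e\ra_n|^2]=\sigma\sum_{i=1}^n\al_i^2\phi_k(x_i)^2\le\sigma\big(\max_{1\le i\le n}\al_i\big)\sum_{i=1}^n\al_i\phi_k(x_i)^2=\sigma\max_{1\le i\le n}\al_i\le C\sigma n^{-1}$, using $\|\phi_k\|_n^2=1$ and \eqref{b3}. Summing over the $N\le Ch^{-1}$ basis functions yields $\E[\sum_{k=1}^N|\la\phi_k,e\ra_n|^2]\le C\sigma h^{-1}n^{-1}$, and the Cauchy--Schwarz (Jensen) inequality $\E[X]\le(\E[X^2])^{1/2}$ then gives $\E[M_{3h}]\le Ch^{-1/2}(C\sigma h^{-1}n^{-1})^{1/2}\le Ch^{-1}(\sigma n^{-1/2})$. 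Combined with the first paragraph, this proves the theorem.

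The hard part is exactly the estimate of $\E[M_{3h}]$: a naive Cauchy--Schwarz $|\la\mu_h,e\ra_n|\le\|\mu_h\|_n\,\|e\|_n$ is useless because $\E\|e\|_n^2\sim\sigma$ carries no decay in $n$, so one must exploit that $\dim Q_h\sim h^{-1}$ is much smaller than $n$ while each noise coordinate appearing in the empirical inner product has variance $O(\sigma/n)$, owing to $\al_i\sim n^{-1}$; the orthonormal-basis bookkeeping that makes this precise is the only genuinely non-routine step. Everything else is the assembly of Lemmas \ref{lem:3.3}--\ref{lem:3.5} and the mesh-dependent norm equivalences.
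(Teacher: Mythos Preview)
Your proof is correct and follows the same overall structure as the paper: reduce via Lemmas \ref{lem:3.3}--\ref{lem:3.5} to estimating $\E[M_{3h}]$, pass from $\|\cdot\|_{Q_h}$ to an $L^2$-type norm at the cost of $h^{-1/2}$, choose an orthonormal basis of $Q_h$ to convert the supremum into a sum, and use independence of the $e_i$ together with $\dim Q_h\le Ch^{-1}$ and $\al_i\le Cn^{-1}$.

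The one genuine difference is your choice of basis. The paper takes $\{\psi_j\}$ orthonormal in $L^2(\Ga)$ and must then bound $\sum_j\sum_i\psi_j(x_i)^2$ by invoking the inverse estimate $\|\psi_j\|_{L^\infty(E)}^2\le Ch_E^{-1}\|\psi_j\|_{L^2(E)}^2$ and the counting $\#\T_E\le Cnh_E$. You instead take $\{\phi_k\}$ orthonormal in the empirical inner product $\la\cdot,\cdot\ra_n$ (legitimate by Lemma \ref{lem:2.4}), so that $\sum_i\al_i\phi_k(x_i)^2=1$ is automatic and the bound $\E|\la\phi_k,e\ra_n|^2\le\sigma\max_i\al_i\le C\sigma n^{-1}$ follows in one line without any inverse inequality. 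This is a clean simplification; the paper's route is slightly more robust in that it never needs $\|\cdot\|_n$ to be a genuine norm on $Q_h$, but since Lemma \ref{lem:2.4} is already available your shortcut costs nothing.
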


\begin{proof} By Lemmas \ref{lem:3.3}-\ref{lem:3.5} we are left to estimate $\mathbb{E}[M_{3h}]$. We first observe that
\be\label{g3}
\mathbb{E}\left[\sup_{\mu_h\in Q_h\backslash\{0\}}\frac{|\la\mu_h,e\ra_n|^2}{\|\mu_h\|_{Q_h}^2}\right]\le Ch^{-1}\mathbb{E}\left[\sup_{\mu_h\in Q_h\backslash\{0\}}\frac{|\la\mu_h,e\ra_n|^2}{\|\mu_h\|_{L^2(\Ga)}^2}\right].
\ee
Let $N_h$ be the dimension of $Q_h$ and $\{\psi_j\}_{j=1}^{N_h}$ be the orthonormal basis of $Q_h$ in the $L^2(\Ga)$ inner product. Then for any $\mu_h=\sum^{N_h}_{j=1}(\mu_h,\psi_j)\psi_j$, by Cauchy-Schwarz inequality and \eqref{b3}
\ben
|\la\mu_h,e\ra_n|^2\le\frac C{n^2}\|\mu_h\|_{L^2(\Ga)}^2\sum^{N_h}_{j=1}\left(\sum^n_{i=1}e_i\psi_j(x_i)\right)^2.
\een
Since $e_i,i=1,2,\cdots,n$, are independent and identically random variables, we have 
\ben
\mathbb{E}\left[\sup_{\mu_h\in Q_h\backslash\{0\}}\frac{|\la\mu_h,e\ra_n|^2}{\|\mu_h\|_{L^2(\Ga)}^2}\right]\le C\frac{\sigma^2}{n^2}
\sum^{N_h}_{j=1}\sum^n_{i=1}\psi_j(x_i)^2.
\een
Since the number of measurement points in $E$, $
\#\mathbb{T}_E\le Cnh_E$ and $N_h\le Ch^{-1}$, we obtain by using the inverse estimate that
\ben
\sum^{N_h}_{j=1}\sum^n_{i=1}\psi_j(x_i)^2\le Cnh\sum^{N_h}_{j=1}\sum_{E\in\mathcal{E}_h}\|\psi_j\|_{L^\infty(E)}^2\le CN_h n\le Cnh^{-1}.
\een
Therefore
\be\label{x3}
\mathbb{E}\left[\sup_{\mu_h\in Q_h\backslash\{0\}}\frac{|\la\mu_h,e\ra_n|^2}{\|\mu_h\|_{L^2(\Ga)}^2}\right]\le Ch^{-1}(\sigma^2n^{-1}).
\ee
This, together with \eqref{g3}, yields
\ben
\mathbb{E}\left[\sup_{\mu_h\in Q_h\backslash\{0\}}\frac{|\la\mu_h,e\ra_n|^2}{\|\mu_h\|_{Q_h}^2}\right]\le Ch^{-2}(\sigma^2n^{-1}).
\een
This completes the proof.
\end{proof}

The following lemma will be useful in deriving the improved estimate for $\|u-u_h\circ\Phi_h^{-1}\|_{L^2(\Om)}$.

\begin{lemma}\label{lem:3.6}
We have
\ben
\mathbb{E}\left[\sup_{\mu_h\in Q_h\backslash\{0\}}\frac{|\la e,\mu_h\ra_n|}{\|\mu_h\|_{H^{1/2}(\Ga)}}\right]\le C|\ln h|(\sigma n^{-1/2}).
\een
\end{lemma}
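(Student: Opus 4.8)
The quantity to be bounded is the same empirical supremum that appears in $M_{3h}$, but now normalized by the stronger norm $\|\mu_h\|_{H^{1/2}(\Ga)}$ rather than $\|\mu_h\|_{Q_h}=\|\mu_h\|_{-1/2,h}$; the gain of the extra half-derivative is what produces the $|\ln h|$ factor instead of the $h^{-1}$ blow-up in Theorem~\ref{thm:3.1}. The plan is to reduce to a chaining argument over the finite-dimensional space $Q_h$ equipped with the $H^{1/2}(\Ga)$ norm. First I would reduce the $H^{1/2}(\Ga)$ norm on $Q_h$ to the mesh-dependent norm $\|\cdot\|_{1/2,h}$: for piecewise-linear $\mu_h$ one has $\|\mu_h\|_{H^{1/2}(\Ga)}\le C\|\mu_h\|_{1/2,h}$ up to logarithmic factors, and in fact it is cleaner to work directly with the scale of spaces $\|\mu_h\|_{1/2,h}$ and insert the $H^{1/2}$-equivalence at the end. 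By Lemma~\ref{lem:2.4}, $\|\mu_h\|_n\sim\|\mu_h\|_{L^2(\Ga)}$, so one may normalize instead by $\|\mu_h\|_n$ plus a correction.

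The core step is a metric-entropy bound for the linear functional $\mu_h\mapsto\la e,\mu_h\ra_n=\sum_i\al_i e_i\mu_h(x_i)$ on the unit ball $\mathcal{B}=\{\mu_h\in Q_h:\|\mu_h\|_{H^{1/2}(\Ga)}\le 1\}$. For two elements $\mu_h,\nu_h\in\mathcal{B}$ the increment $\la e,\mu_h-\nu_h\ra_n$ is, by $\al_i\le B_4n^{-1}$ and $\E[e_i]=0$, $\E[e_i^2]=\sigma$, a sum of independent mean-zero terms with variance $\le C\sigma n^{-2}\sum_i(\mu_h-\nu_h)^2(x_i)=C\sigma n^{-1}\|\mu_h-\nu_h\|_n^2\le C\sigma n^{-1}\|\mu_h-\nu_h\|_{L^2(\Ga)}^2$, so the natural pseudometric is $d(\mu_h,\nu_h)=(\sigma/n)^{1/2}\|\mu_h-\nu_h\|_{L^2(\Ga)}$. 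Now invoke Dudley's entropy integral for the (sub-Gaussian, being a finite sum of bounded-variance terms) process:
\[
\mathbb{E}\Big[\sup_{\mu_h\in\mathcal{B}}\la e,\mu_h\ra_n\Big]\le C\int_0^{\mathrm{diam}(\mathcal{B})}\sqrt{\log N(\mathcal{B},d,\tau)}\,d\tau .
\]
The diameter in the $d$-metric is $\le C(\sigma/n)^{1/2}$ since on $\mathcal{B}$ we have $\|\mu_h\|_{L^2(\Ga)}\le C$ (the $H^{1/2}$ norm dominates $L^2$). To bound the covering numbers I would split at a scale $\tau_0\sim h^{?}(\sigma/n)^{1/2}$: below that scale use the volumetric bound $\log N(\mathcal{B},d,\tau)\le N_h\log(C(\sigma/n)^{1/2}/\tau)$ with $N_h=\dim Q_h\le Ch^{-1}$; above that scale use the key interpolation inequality that on $Q_h$, $\|\mu_h\|_{L^2(\Ga)}\le Ch^{-1/4}\|\mu_h\|_{H^{-1/2}(\Ga)}^{1/2}\|\mu_h\|_{H^{1/2}(\Ga)}^{1/2}$ combined with an inverse estimate to trade off the $L^2$-metric against the $H^{1/2}$-ball radius, giving $\log N(\mathcal{B},d,\tau)\le C(\sigma/n)/\tau^2$ (the hallmark entropy of an $H^{1/2}$-ball in $L^2$ in one dimension). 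Plugging both pieces into Dudley's integral: the first piece contributes $\int_0^{\tau_0}\sqrt{N_h\log(\cdots)}\,d\tau\le C(\sigma/n)^{1/2}\sqrt{N_h}\cdot(\text{something})$, and choosing $\tau_0$ to balance against the second piece, which contributes $\int_{\tau_0}^{C(\sigma/n)^{1/2}}(\sigma/n)^{1/2}\tau^{-1}\,d\tau=C(\sigma/n)^{1/2}\log(C(\sigma/n)^{1/2}/\tau_0)$, yields after optimization the bound $C(\sigma/n)^{1/2}|\ln h|=C|\ln h|(\sigma n^{-1/2})$.

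The main obstacle is getting the entropy estimate right: one must show that the $H^{1/2}(\Ga)$-unit ball of $Q_h$, measured in the $L^2(\Ga)$ metric, has logarithmic entropy $\log N(\tau)\lesssim \tau^{-2}$ uniformly in $h$ down to the resolution scale $\tau\sim h^{1/2}$, and purely volumetric $N_h\log(1/\tau)$ entropy below that scale, and then verify that Dudley's integral of the minimum of these two profiles is $O(|\ln h|)$ rather than $O(h^{-1/2})$. This is exactly the place where the half-derivative of regularity is essential — the same argument with the $H^0$ (i.e. $L^2$) ball would give the $\tau^{-2}$ behavior failing and only the volumetric bound surviving, reproducing the $h^{-1/2}$-type loss. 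A secondary technical point is justifying the sub-Gaussian increment / Dudley bound when the $e_i$ are only assumed to have finite second moment here (unlike Section 4); one handles this by first passing to $\mathbb{E}[\sup(\cdot)^2]$ via the $L^2$ version of the chaining inequality (valid for any process with sub-Gaussian *increments relative to the $L^2$ structure of the $e_i$*, which holds because a bounded-variance sum is enough for the chaining at the level of second moments) or, alternatively, by a direct $N_h$-term union-type computation for $\mathbb{E}[\sup^2]$ refined by the entropy splitting above.
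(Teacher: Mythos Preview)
Your high-level plan --- exploit the half-derivative of regularity via a multiscale/chaining argument over $Q_h$ --- is exactly the right idea, and it is what the paper does. But there is a real gap in your execution, and it is precisely the point you flag as ``secondary'': in Section~3 the $e_i$ are only assumed to satisfy $\E[e_i]=0$ and $\E[e_i^2]=\sigma$, with no sub-Gaussian tail. Dudley's entropy integral as you invoke it requires sub-Gaussian increments, and there is no standard ``$L^2$ version of chaining'' that bounds $\E[\sup_t X_t]$ from second-moment increment control alone (this is exactly what fails for general processes). Your suggested fallback, a ``direct $N_h$-term union-type computation refined by entropy splitting,'' is not spelled out and would, if done naively, lose the $|\ln h|$ and revert to polynomial loss in $h$.

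The paper's proof is a discrete version of your chaining that sidesteps the tail issue entirely. It fixes $p$ with $p<|\ln h|\le p+1$, sets $h_i=h^{(p+1-i)/(p+1)}$, introduces Cl\'ement-type projectors $\pi_{h_i}:L^1(\Ga)\to Q_{h_i}$ with $\|v-\pi_{h_i}v\|_{L^2(\Ga)}\le Ch_i^{1/2}\|v\|_{H^{1/2}(\Ga)}$ (this is where the half-derivative enters, via real interpolation), and telescopes $\mu_h=\sum_{i=0}^{p-1}(\mu_{h_i}-\mu_{h_{i+1}})+\mu_{h_p}$. At each level it then applies the \emph{elementary second-moment} bound already used to prove \eqref{x3}: expand in an $L^2$-orthonormal basis of the (finite-dimensional) level space, use Cauchy--Schwarz and $\E[e_ie_j]=\sigma\delta_{ij}$, and get $\E[\sup|\la e,\mu_{h_i}-\mu_{h_{i+1}}\ra_n|/\|\mu_h\|_{H^{1/2}}]\le Ch_{i+1}^{1/2}h_i^{-1/2}(\sigma n^{-1/2})$. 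Summing $p+1$ such terms gives $C(p+1)h^{-1/(2(p+1))}(\sigma n^{-1/2})\le C|\ln h|(\sigma n^{-1/2})$. So the ``chaining'' is done in finitely many geometric levels, and the only probabilistic input is a variance computation --- no tail bounds, no Dudley. Your entropy-integral heuristic predicts the correct answer, but to make it a proof under the stated hypotheses you must replace the Dudley step by this orthonormal-basis/variance argument at each scale, which is effectively the paper's proof. (Your opening detour through $\|\cdot\|_{1/2,h}$ is also unnecessary; the paper works directly with $\|\cdot\|_{H^{1/2}(\Ga)}$ via the approximation property of $\pi_{h_i}$.)
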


\begin{proof}
Let $h_0=h\le 1$ and $h_i=h^{(p+1-i)/(p+1)}$ for $1\le i\le p$, where $p\ge 1$ is an integer to be determined later.
Obviously $h_i\le h_{i+1}$, $0\le i\le p$. Let $\mathcal{E}_{h_i}$ be a uniform mesh over the boundary $\Ga$ and $Q_{h_i}$
the finite element space defined in \eqref{x1} over the mesh $Q_{h_i}$. Let $\{y_{h_i}^k\}^{N_{h_i}}_{k=1}$ be the nodes of 
the mesh $\mathcal{E}_{h_i}, i=0,\cdots,p$. We introduce the following Cl\'ement-type interpolation operator $\pi_{h_i}:L^1(\Ga)
\to Q_{h_i}$ such that for any $v\in L^1(\Ga)$,
\ben
(\pi_{h_i}v)(y_{h_i}^k)=\frac 1{|S(y_{h_i}^k)|}\int_{S(y_{h_i}^k)}v(x) ds(x),\ \ 1\le k\le N_{h_i},
\een
where $S(y_{h_i}^k)$ is the union of the two elements sharing the common node $y_{h_i}^k$. It is easy to show by scaling argument that
\ben
\|v-\pi_{h_i} v\|_{L^2(\Ga)}\le h_i^m\|v\|_{H^m(\Ga)},\ \ \forall v\in H^1(\Ga), m=0,1.
\een
Thus by the theory of real interpolation of Sobolev spaces, e.g., \cite[Proposition 12.1.5]{bs94},
\be\label{y1}
\|v-\pi_{h_i} v\|_{L^2(\Ga)}\le Ch_i^{1/2}\|v\|_{H^{1/2}(\Ga)},\ \ \forall v\in H^{1/2}(\Ga).
\ee
Now we introduce the telescope sum
\be\label{y2}
\mu_h=\sum^{p-1}_{i=0}(\mu_{h_i}-\mu_{h_{i+1}})+\mu_{h_p},\ \ \forall\mu_h\in Q_h=Q_{h_0},
\ee
where $\mu_{h_i}=\pi_{h_i}\mu_h\in Q_{h_i}$, $0\le i\le p+1$. By \eqref{y1}
\be\label{z0}
\|\mu_{h_i}-\mu_{h_{i+1}}\|_{L^2(\Ga)}\le Ch_{i+1}^{1/2}\|\mu_h\|_{H^{1/2}(\Ga)}.
\ee
Then the same argument in proving \eqref{x3} implies
\ben
& &\mathbb{E}\left[\sup_{\mu_h\in Q_h\backslash\{0\}}\frac{|\la e,\mu_{h_i}-\mu_{h_{i+1}}\ra_n|}{\|\mu_h\|_{H^{1/2}(\Ga)}}\right]\le Ch_{i+1}^{1/2}h_i^{-1/2}(\sigma n^{-1/2}),\\
& &\mathbb{E}\left[\sup_{\mu_h\in Q_h\backslash\{0\}}\frac{|\la e,\mu_{h_p}\ra_n|}{\|\mu_h\|_{H^{1/2}(\Ga)}}\right]
\le Ch_p^{-1/2}(\sigma n^{-1/2}).
\een
By \eqref{y2} we then obtain
\ben
\mathbb{E}\left[\sup_{\mu_h\in Q_h\backslash\{0\}}\frac{|\la e,\mu_h\ra_n|}{\|\mu_h\|_{H^{1/2}(\Ga)}}\right]
\le C(p+1)h^{-\frac 1{2(p+1)}}(\sigma n^{-1/2}).
\een
This completes the proof by taking the integer $p$ such that $p< |\ln h|\le p+1$.
\end{proof}

\begin{theorem}\label{thm:3.2} We have
\ben
& &\mathbb{E}\left[\|u-u_h\circ\Phi_h^{-1}\|_{L^2(\Om)}\right]\\
&\le&Ch^2|\ln h|(\|u\|_{H^2(\Om)}+\|f\|_{H^2(\Om)}+\|g_0\|_{H^2(\Ga)})+C|\ln h|(\sigma n^{-1/2}).
\een
\end{theorem}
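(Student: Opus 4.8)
The plan is an Aubin--Nitsche duality argument adapted to the Lagrange--multiplier saddle point structure of \eqref{p2}--\eqref{p3}. Set $\check u_h=u_h\circ\Phi_h^{-1}$ and $\vp=u-\check u_h\in H^1(\Om)$, and let $(w,\chi)\in H^1(\Om)\times H^{-1/2}(\Ga)$ solve \eqref{p2}--\eqref{p3} with the data $(f,g_0)$ replaced by $(\vp,0)$; this is the weak form of $-\De w=\vp$ in $\Om$, $w=0$ on $\Ga$, so that, $\Om$ being smooth, $w\in H^2(\Om)$, $\chi\in H^{1/2}(\Ga)$ and $\|w\|_{H^2(\Om)}+\|\chi\|_{H^{1/2}(\Ga)}\le C\|\vp\|_{L^2(\Om)}$. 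Testing the dual equation with $v=\vp$ and using \eqref{p2} with $v=w$ (so $\la\lam,w\ra=0$) and \eqref{p3} with $\mu=\chi$ gives the identity
\ben
\|\vp\|_{L^2(\Om)}^2=(\na w,\na\vp)+\la\chi,\vp\ra .
\een
The remaining steps rewrite the right-hand side by inserting finite element interpolants of $w$ and $\chi$ and using the discrete equations \eqref{a4}--\eqref{a5} to replace the discrete solution by the data; because $w\in H^2(\Om)$ every resulting consistency term carries one extra power of $h$ compared with the analysis of Sections~2--3.

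For the volume term I would take $w_I\in V_h$ to be the \emph{nodal} interpolant of $w\circ\Phi_h$; since $w$ vanishes on $\Ga$ and $\Phi_h$ fixes the boundary nodes, $w_I$ vanishes on $\pa\Om_h$, hence $\Pi_h w_I\equiv 0$ on $\Ga$. Writing $(\na w,\na\vp)=(\na(w-\check w_I),\na\vp)+(\na\check w_I,\na\vp)$ with $\check w_I=w_I\circ\Phi_h^{-1}$, the first term is $\le Ch\|w\|_{H^2(\Om)}\|\vp\|_{H^1(\Om)}$ by the analogue of Lemma~\ref{lem:3.2}; in the second I pass from $\check w_I$ to $w_I$ by \eqref{c3}, use \eqref{a4} with $v_h=w_I$ (in which $\la\lam_h,\Pi_h w_I\ra_n=0$) and \eqref{p2} with $v=\check w_I$ (in which $\la\lam,\check w_I\ra=0$ because $\check w_I|_\Ga=0$), and am left with $(f,\check w_I)-(I_h f,w_I)_{\Om_h}$ and geometric terms supported on $\Om^*$, all estimated as in the proof of Lemma~\ref{lem:3.4} using Lemma~\ref{lem:3.1}, \eqref{g2} and the vanishing of $\check w_I$ on $\Ga$. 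For the boundary term I would use the interpolant $\chi_I\in Q_h$ of Lemma~\ref{lem:3.3} (the image in $Q_h$ of the Cl\'ement interpolant of an $H^1$-extension of $\chi$) and $u_I\in V_h$ the \emph{nodal} interpolant of $u\circ\Phi_h$, so that $\Pi_h u_I$ is the nodal interpolant of $g_0$ in $Q_h$; then, since \eqref{a5} with $\mu_h=\chi_I$ gives $\la\chi_I,g_0-\check u_h\ra_n=-\la\chi_I,e\ra_n$,
\ben
\la\chi,\vp\ra=\la\chi-\chi_I,(u-\check u_h)|_\Ga\ra+\big(\la\chi_I,g_0-\check u_h\ra-\la\chi_I,g_0-\check u_h\ra_n\big)-\la\chi_I,e\ra_n .
\een
I bound the first term by $\|\chi-\chi_I\|_{L^2(\Ga)}\|(u-\check u_h)|_\Ga\|_{L^2(\Ga)}$, and split the quadrature error through $g_0-\check u_h=(g_0-\Pi_h u_I)-\Pi_h(u_h-u_I)$: the $(g_0-\Pi_h u_I)$-part is $O(h^2\|g_0\|_{H^2(\Ga)}\|\vp\|_{L^2(\Om)})$ by the argument of Lemma~\ref{lem:3.5}, using $g_0\in H^2(\Ga)$, and the $\Pi_h(u_h-u_I)$-part is $O(h\|\vp\|_{L^2(\Om)}\|u_h-u_I\|_{V_h})$ by \eqref{g1}.

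Collecting the estimates, every term other than $\la\chi_I,e\ra_n$ is bounded by $\|\vp\|_{L^2(\Om)}$ times one of $Ch^2|\ln h|(\|u\|_{H^2(\Om)}+\|f\|_{H^2(\Om)}+\|g_0\|_{H^2(\Ga)})$, $\,Ch\|u-\check u_h\|_{H^1(\Om)}$, $\,Ch\|u_h-u_I\|_{V_h}$, or $Ch^{1/2}\|(u-\check u_h)|_\Ga\|_{L^2(\Ga)}$; and, since $\chi_I$ inherits $\|\chi_I\|_{H^{1/2}(\Ga)}\le C\|\chi\|_{H^{1/2}(\Ga)}\le C\|\vp\|_{L^2(\Om)}$ from the trace theorem and the stability of the Cl\'ement operator,
\ben
|\la\chi_I,e\ra_n|\le\|\chi_I\|_{H^{1/2}(\Ga)}\sup_{\mu_h\in Q_h\bs\{0\}}\frac{|\la e,\mu_h\ra_n|}{\|\mu_h\|_{H^{1/2}(\Ga)}}\le C\|\vp\|_{L^2(\Om)}\sup_{\mu_h\in Q_h\bs\{0\}}\frac{|\la e,\mu_h\ra_n|}{\|\mu_h\|_{H^{1/2}(\Ga)}} .
\een
Dividing by $\|\vp\|_{L^2(\Om)}$ and taking expectations, the deterministic group is already of the stated form; for $\|u-\check u_h\|_{H^1(\Om)}$ I would invoke Theorem~\ref{thm:3.1}, for $\|u_h-u_I\|_{V_h}$ Theorem~\ref{thm:2.1} together with $\mathbb E[M_{3h}]\le Ch^{-1}(\si n^{-1/2})$ from the proof of Theorem~\ref{thm:3.1} (the bounds for $M_{1h},M_{2h}$ being unchanged when $u_I$ is the nodal interpolant), and for $\|(u-\check u_h)|_\Ga\|_{L^2(\Ga)}$ the bound $Ch^{3/2}\|u\|_{H^2(\Om)}+Ch^{1/2}\sum_{i=1}^3 M_{ih}$ obtained from Theorem~\ref{thm:2.1}, the trace theorem and $\|\cdot\|_{L^2(\Ga)}\le Ch^{1/2}\|\cdot\|_{1/2,h}$; each such quantity, multiplied by $h$ (resp.\ $h^{1/2}$) as above, contributes $Ch^2|\ln h|(\cdots)+C(\si n^{-1/2})$ in expectation, while $\mathbb E$ of the supremum is $\le C|\ln h|(\si n^{-1/2})$ by Lemma~\ref{lem:3.6}. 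Summing yields the claimed bound.

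The hard part is that, unlike for the $H^1$ estimate, the duality generates several intermediate quantities — the boundary error trace $(u-\check u_h)|_\Ga$ and the discrete multiplier $\lam_h$ — whose mesh-dependent or empirical norms degrade like $h^{-1}(\si n^{-1/2})$, and the $L^2$ gain survives only if the interpolants are chosen so that these do not cost the extra power: $\Pi_h w_I\equiv 0$ removes the $\la\lam_h,\Pi_h w_I\ra_n$ term outright; $\Pi_h u_I$ being the \emph{nodal} interpolant of $g_0$, combined with the standing hypothesis $g_0\in H^2(\Ga)$, brings the boundary-data quadrature error down to $O(h^2)$ (and is where $\|g_0\|_{H^2(\Ga)}$ enters the estimate); and taking $\chi_I$ as in Lemma~\ref{lem:3.3} makes \eqref{g1} applicable to the $\Pi_h(u_h-u_I)$ quadrature error while keeping $\chi_I$ $H^{1/2}(\Ga)$-stable, so that the noise enters only through the $H^{1/2}$-pairing controlled by Lemma~\ref{lem:3.6}. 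Carrying out the geometric and quadrature bookkeeping with these choices, in the style of the proofs of Lemmas~\ref{lem:3.4}--\ref{lem:3.5}, I expect to be lengthy but essentially routine.
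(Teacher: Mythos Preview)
Your proposal is correct and follows essentially the same Aubin--Nitsche duality argument as the paper: the same dual problem $(w,\chi)$, the same choices of interpolants (nodal $w_I$ so that $\Pi_h w_I\equiv 0$, Cl\'ement-type $\chi_I=\check p_I$), the same six-term decomposition leading to the key inequality \eqref{h9}, and the conclusion via Theorem~\ref{thm:3.1} and Lemma~\ref{lem:3.6}. Your only deviations are cosmetic---taking $u_I$ nodal rather than Cl\'ement, and bounding the term $\la\chi-\chi_I,u-\check u_h\ra$ through $\|(u-\check u_h)|_\Ga\|_{L^2(\Ga)}$ and Theorem~\ref{thm:2.1} rather than directly---and neither changes the structure or the final estimate.
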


\begin{proof} Let $(w,p)\in H^1(\Om)\times H^{-1/2}(\Ga)$ be the solution of the following problem
\be
(\na w,\na v)+\la p,v\ra&=&(u-\check u_h,v),\ \ \forall v\in H^1(\Om),\label{h1}\\
\la\mu,w\ra&=&0,\ \ \forall\mu\in H^{-1/2}(\Ga).\label{h2}
\ee
By the regularity theory of elliptic equations, $(w,p)\in H^2(\Om)\times H^1(\Om)$ and satisfies
\be\label{h3}
\|w\|_{H^2(\Om)}+\|p\|_{H^1(\Om)}\le C\|u-\check u_h\|_{L^2(\Om)}.
\ee
Let $w_I=I_h(w\circ\Phi_h)\in V_h$ be the Lagrange interpolation of $w\in H^2(\Om)$ and $p_I=r_h(p\circ\Phi_h)\in V_h$ be the Cl\'ement interpolation of $p\in H^1(\Om)$. By \eqref{h2} we know that $w=0$ on $\Ga$ and consequently, $w_I=0$ on $\Ga_h$, $\check w_I=w_I\circ\Phi_h^{-1}=0$ on $\Ga$. Now by using \eqref{p2}-\eqref{p3}, \eqref{a4}-\eqref{a5} we obtain
\be\label{h4}
& &\|u-\check u_h\|_{L^2(\Om)}^2\\
&=&(\na(w-\check w_I),\na (u-\check u_h))+\la p-\check p_I,u-\check u_h\ra\nn\\
&+&[(f,\check w_I)-(I_hf,\check w_I)_{\Om_h}]+[(\na w_I,\na u_h)_{\Om_h}-(\na\check w_I,\na\check u_h)]\nn\\
&+&[\la\check p_I,u-\check u_h\ra-\la\check p_I,u-\check u_h\ra_n]-\la \check p_I,e\ra_n\nn\\
&:=&{\rm I}+\cdots+{\rm VI}.\nn
\ee
By Lemma \ref{lem:3.1} and \eqref{h3} we have 
\be\label{h5}
\qquad|{\rm I}|+|{\rm II}|\le Ch\|u-\check u_h\|_{H^1(\Om)}\|u-\check u_h\|_{L^2(\Om)}.
\ee
By \eqref{c5} and \eqref{h3}
\be\label{h6}
|{\rm III}|&\le&Ch^2|\ln h|^{1/2}\|f\|_{H^2(\Om)}\|w_I\|_{V_h}\\
&\le&Ch^2|\ln h|^{1/2}\|f\|_{H^2(\Om)}\|u-\check u_h\|_{L^2(\Om)}.\nn
\ee
Since $\Phi_h|_K=I$ for $K\in\cM_h\backslash\cM_h^*$, by \eqref{c3}, Lemma \ref{lem:3.1} and \eqref{h3} we have
\ben
|{\rm IV}|&\le&Ch\|\check w_I\|_{H^1(\Om^*)}\|\check u_h\|_{H^1(\Om^*)}
\een
Now by using \eqref{g2}, Lemma \ref{lem:3.1}, and \eqref{h3}, we have
\ben
& &\|\check u_h\|_{H^1(\Om^*)}\le\|u-\check u_h\|_{H^1(\Om)}+Ch^{1/2}|\ln h|^{1/2}\|u\|_{H^2(\Om)},\\
& &\|\check w_I\|_{H^1(\Om^*)}\le Ch\|u-\check u_h\|_{L^2(\Om)}+Ch^{1/2}|\ln h|^{1/2}\|u-\check u_h\|_{L^2(\Om)}.
\een
This implies
\be\label{h7}
|{\rm IV}|\le C\left[h\|u-\check u_h\|_{H^1(\Om)}+h^2|\ln h|\|u\|_{H^2(\Om)}\right]\|u-\check u\|_{L^2(\Om)}.
\ee
To estimate the term ${\rm V}$ we first use the triangle inequality
\ben
|{\rm V}|\le|\la\check p_I,u-\check u_I\ra-\la\check p_I,u-\check u_I\ra_n|+|\la\check p_I,u_I-\check u_h\ra-\la\check p_I,\check u_I-\check u_h\ra_n|
\een
By using Lemma \ref{lem:2.2} for $v=\check p_I(u-\check u_I)$ one obtains easily
\ben
|\la\check p_I,u-\check u_I\ra-\la\check p_I,u-\check u_I\ra_n|&\le&Ch^2\|g_0\|_{H^2(\Ga)}\|\check p_I\|_{L^2(\Ga)}\\
&\le&Ch^2\|g_0\|_{H^2(\Ga)}\|u-\check u_h\|_{L^2(\Om)},
\een
where we have used the estimate $\|\check p_I\|_{L^2(\Ga)}\le C\|p\|_{H^1(\Om)}\le C\|u-\check u_h\|_{L^2(\Om)}$.
By \eqref{g1} and \eqref{h3} we have  
\ben
|\la\check p_I,u_I-\check u_h\ra-\la\check p_I,\check u_I-\check u_h\ra_n|
&\le&Ch\|p_I\|_{H^1(\Om_h)}\|u_I-u_h\|_{H^1(\Om_h)}\\
&\le&Ch\|u_I-u_h\|_{H^1(\Om_h)}\|u-\check u_h\|_{L^2(\Om)}.
\een
Thus
\be\label{h8}
\qquad|{\rm V}|\le Ch^2(h^{-1}\|u-\check u_h\|_{H^1(\Om)}+\|u\|_{H^2(\Om)}+\|g_0\|_{H^2(\Ga)})\|u-\check u_h\|_{L^2(\Om)}.
\ee
By inserting \eqref{h5}-\eqref{h8} into \eqref{h4} we obtain finally
\be\label{h9}
\|u-\check u_h\|_{L^2(\Om)}&\le&Ch^2|\ln h|\left[\|u\|_{H^2(\Om)}+\|f\|_{H^2(\Om)}+\|g_0\|_{H^2(\Ga)}\right]\\
&+&Ch\|u-\check u_h\|_{H^1(\Om)}+\sup_{\mu_h\in Q_h\backslash\{0\}}\frac{|\la e,\mu_h\ra_n|}{\|\mu_h\|_{H^{1/2}(\Ga)}}.\nn
\ee
The lemma now follows from Theorem \ref{thm:3.1} and Lemma \ref{lem:3.6}.
\end{proof}

%%%%%%%%%%%%%%%%%%%%%%%%%%%%%%%%%%%%%%%%%%%%%%%%%%%%%%%%%%%%%%%
\section{Sub-Gaussian random errors}
\setcounter{equation}{0}
%%%%%%%%%%%%%%%%%%%%%%%%%%%%%%%%%%%%%%%%%%%%%%%%%%%%%%%%%%%%%%%

In this section, we will study the convergence of our finite element method when the random errors added to the boundary data are sub-Gaussian. We will use the theory of empirical processes \cite{u88, vw96}. 

\begin{definition}\label{def:4.1}
A random variable $X$ is called sub-Gaussian with parameter $\sigma$ if
\ben
\mathbb{E}[e^{\lambda (X-\mathbb{E}[X])}] \leq e^{\sigma^2\lambda^2/2}, ~~\forall~~ \lambda \in \R.
\een
\end{definition}
 
The following definition on the Orilicz $\psi_2$-norm will be used in our analysis. 
\begin{definition}\label{def:4.2}
Let $\psi_2=e^{x^2}-1$ and $X$ be a random variable. The $\psi_2$ norm of $X$ is defined as
\ben
\|X\|_{\psi_2} = \inf\left\{C>0: ~ \mathbb{E}\left[\psi_2\left(\frac{|X|}{C}\right)\right] \leq 1\right\}.
\een
\end{definition}

It is known that \cite[Lemma 2.2.1]{vw96} if $\|X\|_{\psi_2} \leq K$, then
\be\label{d1}
\mathbb{P}(|X|> z) \leq 2 \exp\left(-\frac{z^2}{K^2}\right),\ \forall \ z>0.
\ee
Inversely, if
\be\label{d2}
\mathbb{P}(|X|> z) \leq C \exp\left(-\frac{z^2}{K^2}\right),\ \forall \ z>0,
\ee
then $\|X\|_{\psi_2} \leq \sqrt {1+C} K$.

\begin{definition}\label{def:4.3}
Let $(T,d)$ be a semi-metric space, a stochastic process $\{X_t:~ t\in T\}$ is called a sub-Gaussian process respect to the semi-metric $d$, if
\ben
\mathbb{P}(|X_t-X_s|>z) \leq 2 \exp\left(-\frac 12\frac{z^2}{d^2(t,s)}\right),~\forall~s,t\in T, \ z>0.
\een
\end{definition}

For a semi-metric space $(T,d)$, the covering number $N(\vep,T,d)$ is the minimum number of $\vep$-balls that covers $T$. A set is called $\vep$-separated if the distance of any two points in the set is strictly greater than $\vep$. The packing number $D(\vep,T,d)$ is the maximum number of $\vep$-separated points in $T$. It is easy to check that \cite[P.98]{vw96}
\be\label{f1}
N(\vep,T,d)\le D(\vep,T,d)\le N(\frac\vep 2,T,d).
\ee
The following maximum inequality can be found in \cite[Section 2.2.1]{vw96}.

\begin{lemma}\label{lem:4.1}
If $\{X_t: t \in T\}$ is a separable sub-Gaussian process respect to the semi-metric $d$, then
\ben
\|\sup_{s,t\in T} |X_t-X_s|\|_{\psi_2} \leq K\int_0^{\mbox{\scriptsize\rm diam}\,T} \sqrt{D(\vep,T,d)} \,d\vep.
\een
Here $K>0$ is some constant.
\end{lemma}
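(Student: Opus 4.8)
The plan is to follow the classical chaining argument for sub-Gaussian processes, as in \cite[Section 2.2.1]{vw96}. First I would fix a point $t_0\in T$ and observe that it suffices to bound $\|\sup_{t\in T}|X_t-X_{t_0}|\|_{\psi_2}$, since $\sup_{s,t\in T}|X_t-X_s|\le 2\sup_{t\in T}|X_t-X_{t_0}|$. Let $\eta=\diam T$ and set $\eta_j=\eta 2^{-j}$ for $j\ge 0$. For each $j$ I would choose a finite $\eta_j$-net $T_j\subset T$ of cardinality $D(\eta_j,T,d)$ (using \eqref{f1} to pass between covering and packing numbers), with $T_0=\{t_0\}$, and define the nearest-point projection $\pi_j:T\to T_j$. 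By separability one reduces to a countable dense subset, so for each $t$ the telescoping identity $X_t-X_{t_0}=\sum_{j\ge 1}\bigl(X_{\pi_j(t)}-X_{\pi_{j-1}(t)}\bigr)$ holds almost surely; here $d(\pi_j(t),\pi_{j-1}(t))\le d(\pi_j(t),t)+d(t,\pi_{j-1}(t))\le \eta_j+\eta_{j-1}\le 3\eta_{j-1}$.

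The core step is to control each link in the chain. The number of pairs $(\pi_j(t),\pi_{j-1}(t))$ that can occur is at most $|T_j|\,|T_{j-1}|\le D(\eta_j,T,d)^2$. For a single pair $(s,s')$ with $d(s,s')\le 3\eta_{j-1}$, the sub-Gaussian hypothesis gives $\|X_s-X_{s'}\|_{\psi_2}\le C\,d(s,s')\le 3C\eta_{j-1}$ via the bound \eqref{d1}--\eqref{d2} relating tail decay to the $\psi_2$-norm. Then I would invoke the standard maximal inequality for $\psi_2$-variables: for finitely many $X^{(1)},\dots,X^{(m)}$ one has $\|\max_k X^{(k)}\|_{\psi_2}\le C\sqrt{\log(1+m)}\,\max_k\|X^{(k)}\|_{\psi_2}$ (cf. \cite[Lemma 2.2.2]{vw96}). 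Applying this to the $j$-th level of links yields
\ben
\Bigl\|\sup_{t}|X_{\pi_j(t)}-X_{\pi_{j-1}(t)}|\Bigr\|_{\psi_2}\le C\eta_{j-1}\sqrt{\log\bigl(1+D(\eta_j,T,d)^2\bigr)}\le C\eta_{j-1}\sqrt{\log\bigl(1+D(\eta_j,T,d)\bigr)}.
\een

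Finally, summing over $j$ using the triangle inequality for the $\psi_2$-norm,
\ben
\Bigl\|\sup_{t\in T}|X_t-X_{t_0}|\Bigr\|_{\psi_2}\le C\sum_{j\ge 1}\eta_{j-1}\sqrt{\log\bigl(1+D(\eta_j,T,d)\bigr)},
\een
and I would recognize the right-hand side as a Riemann-type sum for the entropy integral: since $\eta_{j-1}=2\eta_j$ and $\eta_{j-1}-\eta_j=\eta_j$, and $\vep\mapsto\sqrt{\log(1+D(\vep,T,d))}$ is nonincreasing, each term $\eta_{j-1}\sqrt{\log(1+D(\eta_j,T,d))}$ is comparable to $\int_{\eta_{j+1}}^{\eta_j}\sqrt{\log(1+D(\vep,T,d))}\,d\vep$ up to an absolute constant, whence the sum is bounded by $C\int_0^{\diam T}\sqrt{\log(1+D(\vep,T,d))}\,d\vep$. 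Absorbing the factor $\sqrt{\log(1+\cdot)}$ versus the stated $\sqrt{D(\vep,T,d)}$ is harmless because $\log(1+x)\le x$, so the entropy integral is dominated by $\int_0^{\diam T}\sqrt{D(\vep,T,d)}\,d\vep$, giving the claim.

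The main obstacle is the bookkeeping at the level of the chaining: making sure the net cardinalities, the diameter bounds $d(\pi_j(t),\pi_{j-1}(t))\lesssim\eta_{j-1}$, and the index shifts in $\eta_j$ all line up so that the discrete sum genuinely telescopes into the entropy integral with an absolute constant. The probabilistic input itself (sub-Gaussian tails $\Rightarrow$ $\psi_2$-bound $\Rightarrow$ maximal inequality over a finite set) is routine once the geometric scaffolding is in place; separability is only needed to legitimize the almost-sure telescoping over a countable dense set and does not affect the final bound.
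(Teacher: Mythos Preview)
The paper does not supply its own proof of this lemma: it is quoted verbatim from \cite[Section 2.2.1]{vw96} as a black-box maximal inequality. Your chaining argument is exactly the proof given there (see in particular \cite[Theorem 2.2.4 and Corollary 2.2.8]{vw96}), so the proposal is correct and matches the source the paper defers to.

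One remark worth making: your argument in fact produces the sharper entropy integral $\int_0^{\diam T}\sqrt{\log\bigl(1+D(\vep,T,d)\bigr)}\,d\vep$, which is how the result is stated in \cite{vw96}. The paper's version with $\sqrt{D(\vep,T,d)}$ in place of $\sqrt{\log D(\vep,T,d)}$ is either a transcription slip or a deliberate weakening; your final step $\log(1+x)\le x$ bridges the two, so the lemma as written follows, but in the applications (Theorems \ref{thm:4.1} and \ref{thm:4.2}) it is actually the logarithmic form that is used after invoking Lemma \ref{lem:4.2}.
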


The following lemma provides the estimate of the covering number for finite dimensional subsets \cite[Corollary 2.6]{g00}.
\begin{lemma}\label{lem:4.2}
Let $G$ be a finite dimensional subspace of $L^2(D)$ of dimension $N>0$ and $G_R=\{f\in G: \|f\|_{L^2(D)}\le R\}$. Then
\ben
N(\vep,G_R,\|\cdot\|_{L^2(D)})\le (1+4R/\vep)^N,\ \ \forall \vep>0.
\een
\end{lemma}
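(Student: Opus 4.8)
The final statement to prove is Lemma~\ref{lem:4.2}, the covering number estimate for a finite-dimensional subspace. Let me sketch a proof plan.

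\medskip

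The plan is to use a standard volumetric (packing) argument. First I would pass from the covering number $N(\vep, G_R, \|\cdot\|_{L^2(D)})$ to the packing number $D(\vep, G_R, \|\cdot\|_{L^2(D)})$ using the elementary relation \eqref{f1}, so that it suffices to bound the maximal number of $\vep$-separated points in $G_R$. Since $G$ is an $N$-dimensional subspace of $L^2(D)$, the norm $\|\cdot\|_{L^2(D)}$ restricted to $G$ is (isometric to) a norm on $\R^N$, and $G_R$ corresponds to a ball of radius $R$ in that normed space; all notions of volume below refer to the $N$-dimensional Lebesgue measure pulled back through a fixed linear isomorphism $G\cong\R^N$, which is harmless because only ratios of volumes will enter.

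\medskip

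Next I would carry out the classical packing-volume comparison. Let $f_1,\dots,f_M\in G_R$ be a maximal $\vep$-separated set, so $M=D(\vep,G_R,\|\cdot\|_{L^2(D)})$. The open balls $B(f_k,\vep/2)$ are pairwise disjoint, and each is contained in the enlarged ball $B(0, R+\vep/2)$. Comparing $N$-dimensional volumes gives
\ben
M\left(\frac{\vep}{2}\right)^N \le \left(R+\frac{\vep}{2}\right)^N,
\een
since in a fixed linear coordinate system the volume of a $\|\cdot\|_{L^2(D)}$-ball of radius $r$ in $G$ scales like $r^N$ times a common constant. Hence $M \le (1 + 2R/\vep)^N$. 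Then applying the left inequality of \eqref{f1}, $N(\vep,G_R,\|\cdot\|_{L^2(D)}) \le D(\vep,G_R,\|\cdot\|_{L^2(D)}) = M \le (1+2R/\vep)^N \le (1+4R/\vep)^N$, which is the claimed bound (with room to spare).

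\medskip

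I do not expect a genuine obstacle here; this is a textbook estimate and the reference \cite{g00} is cited precisely for it. The only point requiring a word of care is the reduction to Lebesgue volume on $\R^N$: one must note that the volume ratio argument is independent of the choice of linear isomorphism $G\cong\R^N$ and of any Euclidean structure, because only the scaling $r\mapsto r^N$ of the ratio of volumes of concentric balls in a normed space is used, and that scaling holds for the ball of any norm. Everything else is the standard disjoint-balls-inside-a-bigger-ball counting.
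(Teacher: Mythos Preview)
Your argument is correct; it is exactly the standard volumetric packing argument, and in fact yields the sharper bound $(1+2R/\vep)^N$. The paper does not give its own proof of this lemma but simply cites it from \cite[Corollary~2.6]{g00}, so there is nothing further to compare.
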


\begin{theorem}\label{thm:4.1}We have
\ben
& &\|\,\|u-u_h\circ\Phi_h^{-1}\|_{H^1(\Om)}\,\|_{\psi_2}+h^{1/2}\|\,\|\lam-\lam_h\|_{L^2(\Ga)}\,\|_{\psi_2}\\
&\le&Ch|\ln h|^{1/2}(\|u\|_{H^2(\Om)}+\|f\|_{H^2(\Om)})+Ch^{-1}(\sigma h^{-1/2}).
\een
\end{theorem}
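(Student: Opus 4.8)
The plan is to follow the proof of Theorem~\ref{thm:3.1} step by step, replacing each expectation bound by a $\psi_2$-norm bound. The errors $M_{1h}$ and $M_{2h}$ of Theorem~\ref{thm:2.1} do not involve the noise $e$, so Lemma~\ref{lem:3.4} and Lemma~\ref{lem:3.5} still bound them by $Ch|\ln h|^{1/2}(\|u\|_{H^2(\Om)}+\|f\|_{H^2(\Om)})$, and the $\psi_2$-norm of a deterministic quantity is a fixed multiple of it; the same applies to the term $Ch\|u\|_{H^2(\Om)}$ in Lemma~\ref{lem:3.3}. Since $h_E\ge C_1h$ for all $E\in\cE_h$, we have $\|\mu_h\|_{-1/2,h}\ge C_1^{1/2}h^{1/2}\|\mu_h\|_{L^2(\Ga)}$, hence $h^{1/2}\|\lam-\lam_h\|_{L^2(\Ga)}\le C\|\lam-\lam_h\|_{-1/2,h}$. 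Applying $\|\cdot\|_{\psi_2}$ to the pathwise estimate of Lemma~\ref{lem:3.3} (using its monotonicity for nonnegative random variables and the triangle inequality) then reduces the theorem to the single estimate $\|M_{3h}\|_{\psi_2}\le Ch^{-1}(\sigma n^{-1/2})$, which is exactly the noise term of Theorem~\ref{thm:3.1}.

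To bound $\|M_{3h}\|_{\psi_2}$ I would view the noise term as the supremum of a sub-Gaussian process over a finite-dimensional ball. Using $\|\mu_h\|_{Q_h}=\|\mu_h\|_{-1/2,h}\ge C_1^{1/2}h^{1/2}\|\mu_h\|_{L^2(\Ga)}$ and the $0$-homogeneity of $|\la\mu_h,e\ra_n|/\|\mu_h\|_{L^2(\Ga)}$, one gets $M_{3h}\le Ch^{-1/2}\sup_{\mu_h\in T}|X_{\mu_h}|$, where $X_{\mu_h}:=\la\mu_h,e\ra_n=\sum_{i=1}^n\al_ie_i\mu_h(x_i)$ and $T:=\{\mu_h\in Q_h:\|\mu_h\|_{L^2(\Ga)}\le1\}$. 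The crucial point is that $\{X_{\mu_h}:\mu_h\in T\}$ is a sub-Gaussian process in the sense of Definition~\ref{def:4.3} with respect to a multiple of the $L^2(\Ga)$ metric: for $\mu_h,\nu_h\in T$ the increment $X_{\mu_h}-X_{\nu_h}=\sum_i\al_i(\mu_h-\nu_h)(x_i)e_i$ is a linear combination of independent mean-zero variables that are sub-Gaussian with parameter $\sigma$ in the sense of Definition~\ref{def:4.1}, hence is itself sub-Gaussian with parameter $\sigma\big(\sum_i\al_i^2(\mu_h-\nu_h)^2(x_i)\big)^{1/2}$; by \eqref{b3} one has $\al_i^2\le B_4n^{-1}\al_i$, so this parameter is at most $C\sigma n^{-1/2}\|\mu_h-\nu_h\|_n\le C\sigma n^{-1/2}\|\mu_h-\nu_h\|_{L^2(\Ga)}$ by Lemma~\ref{lem:2.4}. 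A Chernoff bound turns this into the tail estimate of Definition~\ref{def:4.3} with the semi-metric $d(\mu_h,\nu_h)=c_0\sigma n^{-1/2}\|\mu_h-\nu_h\|_{L^2(\Ga)}$, and the process is separable because $Q_h$ is finite dimensional and $\mu_h\mapsto X_{\mu_h}$ is linear.

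Then I would invoke the maximal inequality of Lemma~\ref{lem:4.1}. With $N_h:=\dim Q_h\le Ch^{-1}$ one has $\diam_d T\le 2c_0\sigma n^{-1/2}$, and by \eqref{f1} together with Lemma~\ref{lem:4.2} applied to the unit ball of $Q_h$ (rescaling $\vep$ by $c_0\sigma n^{-1/2}$), $\log D(\vep,T,d)\le N_h\log(1+8c_0\sigma n^{-1/2}/\vep)$. Substituting $\vep=c_0\sigma n^{-1/2}s$ in the entropy integral $\int_0^{\diam_d T}\sqrt{\log D(\vep,T,d)}\,d\vep$ of Lemma~\ref{lem:4.1} bounds it by $C\sqrt{N_h}\,\sigma n^{-1/2}\int_0^2\sqrt{\log(1+8/s)}\,ds\le C\sigma n^{-1/2}h^{-1/2}$, the last integral converging because $\sqrt{\log(1/s)}$ is integrable at $0$. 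Since $0\in T$ and $X_0=0$, $\sup_{\mu_h\in T}|X_{\mu_h}|\le\sup_{\mu_h,\nu_h\in T}|X_{\mu_h}-X_{\nu_h}|$, so $\big\|\sup_{\mu_h\in T}|X_{\mu_h}|\big\|_{\psi_2}\le C\sigma n^{-1/2}h^{-1/2}$ and hence $\|M_{3h}\|_{\psi_2}\le Ch^{-1/2}\cdot C\sigma n^{-1/2}h^{-1/2}=Ch^{-1}(\sigma n^{-1/2})$, which with the first paragraph finishes the argument.

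The reduction and the covering-number estimate are routine; the main obstacle is controlling the increment of $X_{\mu_h}$ in the correct semi-metric, i.e. obtaining the sub-Gaussian parameter proportional to $\sigma n^{-1/2}\|\mu_h-\nu_h\|_{L^2(\Ga)}$ with the sharp factor $n^{-1/2}$. This relies on the uniform weight bound $\al_i\sim n^{-1}$ of \eqref{b3} and on the equivalence $\|\cdot\|_n\sim\|\cdot\|_{L^2(\Ga)}$ on $Q_h$ from Lemma~\ref{lem:2.4}. The other point requiring care is that Lemma~\ref{lem:4.2} must be applied to the $N_h$-dimensional ball $T$, so that the entropy integral contributes only a factor $\sqrt{N_h}=O(h^{-1/2})$, which is exactly what combines with the prefactor $h^{-1/2}$ to produce the final $h^{-1}$.
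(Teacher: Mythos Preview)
Your proposal is correct and follows essentially the same route as the paper: reduce via Lemmas~\ref{lem:3.3}--\ref{lem:3.5} to bounding $\|M_{3h}\|_{\psi_2}$, identify $\mu_h\mapsto\la\mu_h,e\ra_n$ as a sub-Gaussian process with increment parameter $C\sigma n^{-1/2}\|\mu_h-\nu_h\|_n$ (using \eqref{b3}), and apply the maximal inequality of Lemma~\ref{lem:4.1} together with the covering-number bound of Lemma~\ref{lem:4.2} on the $N_h$-dimensional unit ball. The only cosmetic difference is that the paper works with the semi-metric $\|\cdot\|_n^*=B_4\sigma n^{-1/2}\|\cdot\|_n$ and invokes Lemma~\ref{lem:2.4} when estimating the covering number, whereas you push Lemma~\ref{lem:2.4} one step earlier to phrase the semi-metric in terms of $\|\cdot\|_{L^2(\Ga)}$; the computations are otherwise identical.
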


\begin{proof} By Lemmas \ref{lem:3.3}-\ref{lem:3.5} we are left to estimate $\|M_{3h}\|_{\psi_2}$. 
Let $F_h=\{\mu_h\in Q_h:\mbox{ }\|\mu_h\|_{L^2(\Ga)}\leq 1\}$, then
\be\label{f2}
\|M_{3h}\|_{\psi_2}\leq h^{-1/2}\|\sup_{\mu_h\in  F_h}\la |\mu_h,e\ra_n| \|_{\psi_2}.
\ee
For any $\mu_h\in F_h$, denote by $E_n(\mu_h)=\la\mu_h,e\ra_n$. Then $E_n(\mu_h)-E_n(\mu_h')=\sum^n_{i=1}c_ie_i$, where $c_i=\alpha_i(\mu_h-\mu'_h)(x_i)$, $i=1,2,\cdots,n$. For any $\lambda >0 $, since $\al_i\le B_4n^{-1}$ by \eqref{b3},
\ben
\mathbb{E}\left[e^{\lambda\sum_{i=1}^n c_i e_i}\right]&\leq& e^{\frac{1}{2}\lambda^2 \sigma^2 \sum_{i=1}^n c_i^2} 
\le e^{\frac{1}{2}B_4\lambda^2 \sigma^2 n^{-1}\|\mu_h-\mu_h'\|^2_n} = e^{\frac{1}{2}\sigma_1^2\lambda^2},
\een
where $\sigma_1=B_4\sigma n^{-1/2}\|\mu_h-\mu_h'\|_n$. Thus $E_n(\mu_h)-E_n(\mu_h')$ is a sub-Gaussian process with the parameter $\sigma_1$. This implies by \eqref{d1} that 
\ben
\mathbb{P}(|E_n(\mu_h-\mu'_h)|>z) \leq 2e^{-z^2/2\sigma_1^2}, \ \ \forall z>0.
\een
Thus $E_n(\mu_h)$ is a sub-Gaussian random process with respect to the semi-distance $d(\mu_h,\mu_h')=\|\mu_h-\mu_h'\|_n^*$, where $\|\mu_h\|^*_n = B_4\sigma n^{-1/2}\|\mu_h\|_n$.

By Lemma \ref{lem:2.4} we know that the diameter of $F_h$ in terms of the semi-distance $d$ is bounded by $2C_2B_4(\sigma n^{-1})$. By maximal inequality in Lemma \ref{lem:4.1} and \eqref{f1} we have
\ben
\|\sup_{\mu_h\in  F_h}|\la\mu_h,e\ra_n|\|_{\psi_2} & \leq & K\int_0^{2C_2B_4\sigma n^{-1/2}}\sqrt{\log N(\frac{\varepsilon}{2},F_h,\|\cdot\|_n^*)}\,d\vep \\
& = & K\int_0^{2C_2B_4\sigma n^{-1/2}}\sqrt{\log N(\frac{\varepsilon}{2B_4\sigma n^{-1/2}},F_h,\|\cdot\|_n)}\,d\varepsilon.
\een
By Lemma \ref{lem:2.4} and Lemma \ref{lem:4.2} we know that for any $\de>0$,
\ben
N(\de,F_h,\|\cdot\|_n)\le N(C_1^{-1}\de,F_h,\|\cdot\|_{L^2(\Ga)})\le (1+4C_1/\de)^{N_h},
\een
where $N_h$ is the dimension of $Q_h$ which is bounded by $Ch^{-1}$. Therefore,
\be\label{z1}
\|\sup_{\mu_h\in  F_h}|\la\mu_h,e\ra_n|\|_{\psi_2}
&\le&Ch^{-1/2}\int_0^{2C_2B_4\sigma n^{-1/2}}\sqrt{\log\left(1+\frac{C\sigma n^{-1/2}}{\vep}\right)}\,d\varepsilon\nn\\
&\le&Ch^{-1/2}(\sigma n^{-1/2}).
\ee
This shows $\|M_{3h}\|_{\psi_2}\le Ch^{-1}(\sigma n^{-1/2})$ by \eqref{f2}.
\end{proof}

By \eqref{d2}, Theorem \ref{thm:4.1} implies that the probability of the $H^1$-finite element error violatingf the convergence
order $O(h|\ln h|^{1/2}(\|u\|_{H^2(\Om)}+\|f\|_{H^2(\Om)})+h^{-1}(\sigma n^{-1/2}))$ decays exponentially.

\begin{theorem}\label{thm:4.2} We have
\ben
& &\|\,\|u-u_h\circ\Phi_h^{-1}\|_{L^2(\Om)}\,\|_{\psi_2}\\
&\le&Ch^2|\ln h|(\|u\|_{H^2(\Om)}+\|f\|_{H^2(\Om)}+\|g_0\|_{H^2(\Ga)})+C|\ln h|(\sigma n^{-1/2}).
\een
\end{theorem}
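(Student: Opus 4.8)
The plan is to carry the Aubin--Nitsche duality argument of Theorem~\ref{thm:3.2} over to the Orlicz $\psi_2$-norm. The crucial point is that the estimate \eqref{h9} established inside the proof of Theorem~\ref{thm:3.2},
\ben
\|u-\check u_h\|_{L^2(\Om)}&\le&Ch^2|\ln h|\big[\|u\|_{H^2(\Om)}+\|f\|_{H^2(\Om)}+\|g_0\|_{H^2(\Ga)}\big]\\
&+&Ch\|u-\check u_h\|_{H^1(\Om)}+\sup_{\mu_h\in Q_h\backslash\{0\}}\frac{|\la e,\mu_h\ra_n|}{\|\mu_h\|_{H^{1/2}(\Ga)}},
\een
is an inequality between nonnegative random variables that holds for every realization of the noise $e$ (which enters only through $\check u_h$ and the last term). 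Since $\|\cdot\|_{\psi_2}$ is a norm which is monotone with respect to the pointwise ordering of nonnegative random variables, I would apply $\|\cdot\|_{\psi_2}$ to both sides and use the triangle inequality to obtain
\ben
\big\|\,\|u-\check u_h\|_{L^2(\Om)}\,\big\|_{\psi_2}&\le&Ch^2|\ln h|\big[\|u\|_{H^2(\Om)}+\|f\|_{H^2(\Om)}+\|g_0\|_{H^2(\Ga)}\big]\\
&+&Ch\,\big\|\,\|u-\check u_h\|_{H^1(\Om)}\,\big\|_{\psi_2}+\Big\|\sup_{\mu_h\in Q_h\backslash\{0\}}\frac{|\la e,\mu_h\ra_n|}{\|\mu_h\|_{H^{1/2}(\Ga)}}\Big\|_{\psi_2}.
\een

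For the second term on the right I would invoke Theorem~\ref{thm:4.1}, so that $Ch\,\big\|\,\|u-\check u_h\|_{H^1(\Om)}\,\big\|_{\psi_2}\le Ch^2|\ln h|^{1/2}(\|u\|_{H^2(\Om)}+\|f\|_{H^2(\Om)})+C(\sigma n^{-1/2})$, which is absorbed into the asserted bound. The only genuinely new estimate needed is the $\psi_2$-analogue of Lemma~\ref{lem:3.6}, namely
\ben
\Big\|\sup_{\mu_h\in Q_h\backslash\{0\}}\frac{|\la e,\mu_h\ra_n|}{\|\mu_h\|_{H^{1/2}(\Ga)}}\Big\|_{\psi_2}\le C|\ln h|(\sigma n^{-1/2}).
\een
I would prove it by rerunning the proof of Lemma~\ref{lem:3.6} at the level of $\psi_2$-norms: use the same geometric scales $h_i=h^{(p+1-i)/(p+1)}$, the Cl\'ement-type operators $\pi_{h_i}$, and the telescope decomposition \eqref{y2}. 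On each level $\mu_{h_i}-\mu_{h_{i+1}}$ lies in a finite-dimensional space of dimension $O(h_i^{-1})$ and, by \eqref{z0}, in an $L^2(\Ga)$-ball of radius $Ch_{i+1}^{1/2}\|\mu_h\|_{H^{1/2}(\Ga)}$. The sub-Gaussian process argument used in the proof of Theorem~\ref{thm:4.1} (the maximal inequality of Lemma~\ref{lem:4.1} combined with the covering-number bound of Lemma~\ref{lem:4.2}, exactly as in \eqref{z1}) then yields
\ben
\Big\|\sup_{\mu_h\in Q_h,\,\|\mu_h\|_{H^{1/2}(\Ga)}\le1}|\la e,\mu_{h_i}-\mu_{h_{i+1}}\ra_n|\Big\|_{\psi_2}\le Ch_i^{-1/2}h_{i+1}^{1/2}(\sigma n^{-1/2})=Ch^{-\frac1{2(p+1)}}(\sigma n^{-1/2}),
\een
and likewise the coarse term $\mu_{h_p}$ contributes $Ch_p^{-1/2}(\sigma n^{-1/2})=Ch^{-\frac1{2(p+1)}}(\sigma n^{-1/2})$. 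Summing the $p+1$ pieces via the triangle inequality for $\|\cdot\|_{\psi_2}$ and choosing $p$ with $p<|\ln h|\le p+1$, so that $h^{-1/(2(p+1))}\le e^{1/2}$, yields the claimed bound $C|\ln h|(\sigma n^{-1/2})$.

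I expect the bulk of the work to be the per-level bookkeeping in that last step: one must verify that the families of differences genuinely sit in a ball of the stated radius inside a space of the stated dimension, so that the entropy integral $\int_0^{\mathrm{radius}}\sqrt{\log N(\vep,\cdot,\|\cdot\|_n^*)}\,d\vep$ (with $\log N$ controlled through Lemma~\ref{lem:4.2} by $N_{h_i}\log(1+C\,\mathrm{radius}/\vep)\le Ch_i^{-1}\log(1+C\,\mathrm{radius}/\vep)$) converges to a constant multiple of the radius, and that the dimension factor $h_i^{-1/2}$ and the radius factor $h_{i+1}^{1/2}$ indeed combine to the scale-independent power $h^{-1/(2(p+1))}$. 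Granting these, assembling the three displays and discarding the lower-order terms finishes the proof exactly as in Theorem~\ref{thm:3.2}.
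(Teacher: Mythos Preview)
Your proposal is correct and follows essentially the same route as the paper: starting from the pathwise estimate \eqref{h9}, applying the $\psi_2$-norm, handling $h\|u-\check u_h\|_{H^1(\Om)}$ via Theorem~\ref{thm:4.1}, and then bounding the remaining noise supremum by the telescope decomposition \eqref{y2} together with the sub-Gaussian maximal inequality/entropy argument of \eqref{z1} on each level. The paper's proof is the same, written slightly more tersely (it records the per-level bound as $Ch_{i+1}^{1/2}(h_i^{-1/2}+h_{i+1}^{-1/2})(\sigma n^{-1/2})$ to reflect that $\mu_{h_i}-\mu_{h_{i+1}}\in Q_{h_i}+Q_{h_{i+1}}$, which coincides with your $Ch_i^{-1/2}h_{i+1}^{1/2}(\sigma n^{-1/2})$ up to a constant).
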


\begin{proof} Let $G_h=\{\mu_h\in Q_h:\|\mu_h\|_{H^{1/2}(\Ga)}\le 1\}$. By \eqref{h9} we are left to show
\be\label{z2}
\|\sup_{\mu_h\in G_h}|\la \mu_h,e\ra_n|\|_{\psi_2}\le C|\ln h|(\sigma n^{-1/2}).
\ee
We again use the telescope sum in \eqref{y2} and obtain
\be\label{z3}
& &\|\sup_{\mu_h\in G_h}|\la \mu_h,e\ra_n|\|_{\psi_2}\\
&\le&\sum^{p-1}_{i=0}\|\sup_{\mu_h\in G_h}|\la \mu_{h_i}-\mu_{h_{i+1}},e\ra_n|\|_{\psi_2}+\|\sup_{\mu_h\in G_h}|\la \mu_{h_p},e\ra_n|\|_{\psi_2}.\nn
\ee 
By the same argument in proving \eqref{z1} and using \eqref{z0} we have
\ben
& &\|\sup_{\mu_h\in G_h}|\la \mu_{h_i}-\mu_{h_{i+1}},e\ra_n|\|_{\psi_2}\le Ch_{i+1}^{1/2}(h_i^{-1/2}+h_{i+1}^{-1/2})(\sigma n^{-1/2}),\\
& &\|\sup_{\mu_h\in G_{h_p}}|\la \mu_h,e\ra_n|\|_{\psi_2}\le Ch_p^{-1/2}(\sigma n^{-1/2}).
\een 
Inserting the estimates to \eqref{z3} shows \eqref{z2} by taking $p$ such that $|\ln h|< p\le |\ln h|+1$.
\end{proof}

By \eqref{d2}, Theorem \ref{thm:4.2} implies that the probability of the $L^2$-finite element error violating the convergence
order $O(h^2|\ln h|(\|u\|_{H^2(\Om)}+\|f\|_{H^2(\Om)}+\|g_0\|_{H^2(\Ga)})+|\ln h|(\sigma n^{-1/2}))$ decays exponentially.

%%%%%%%%%%%%%%%%%%%%%%%%%%%%%%%%%%%%%%%%%%%%%%%%%%%%%%%%%%%%%%%
\section{Numerical examples}
\setcounter{equation}{0}
%%%%%%%%%%%%%%%%%%%%%%%%%%%%%%%%%%%%%%%%%%%%%%%%%%%%%%%%%%%%%%%

In this section, we show several numerical experiments to verify the theoretical analysis in this paper. 
The analyses in section 3 and section 4 suggest that the optimal convergence rate can be achieved by taking $n=O(h^{-4})$. For the examples below, we take the exact solution $u_0=\sin(5x+1)\sin(5y+1)$.

\begin{example}\label{numerical.1}
We take $\Omega=(0,1)\times(0,1)$. We construct the finite element mesh by first dividing the domain into $h^{-1}\times h^{-1}$ uniform rectangles and then connecting the lower left and upper right
angle. We set $\{x_i\}_{i=1}^n$ being uniformly distributed on $\Ga$, and $e_i,~ i=1,2,\cdots, n$, being independent normal random variables with variance $\sigma=2$.  We take different $n=h^{-i}$, $i=1,2,3,4$. Figure \ref{example.1} shows
the convergence rate of the error in the $H^1$ and $L^2$ norm for each choice of $n$. Table \ref{tab.1} show the convergence rate $\alpha$ in the $H^1$ norm and the convergence rate $\beta$ in the $L^2$ norm.

We observe the numerical results confirm our theoretical analysis. The optimal convergence rate is achieved when choosing $n=h^{-4}$ while the other choices do not achieve optimal convergence. For example, when $n=h^{-2}$, the $L^2$ error is approximately $O(h^1)$ and no convergence for the $H^1$ error.
\end{example}

\begin{figure}
\subfigure[$H^1$ convergence]{\includegraphics[width=6cm]{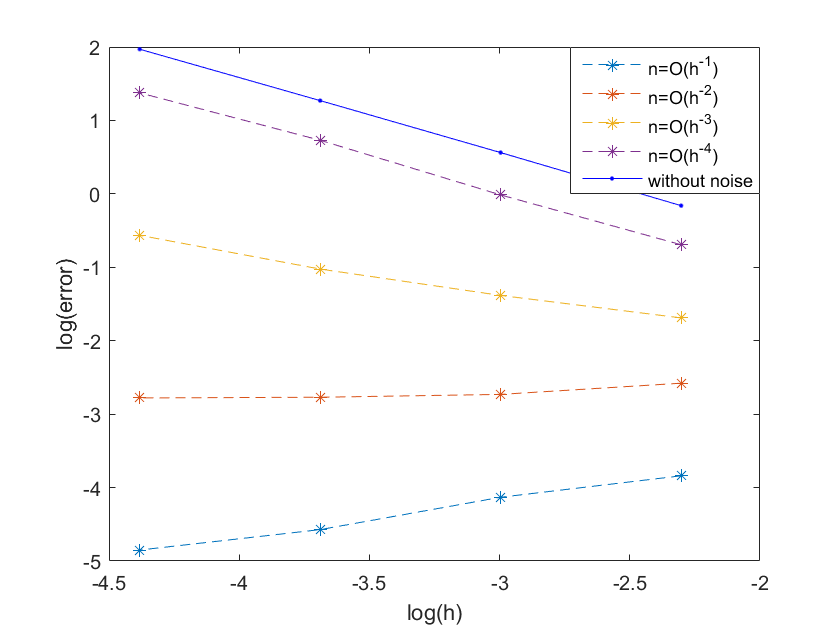}}
\subfigure[$L^2$ convergence]{\includegraphics[width=6cm]{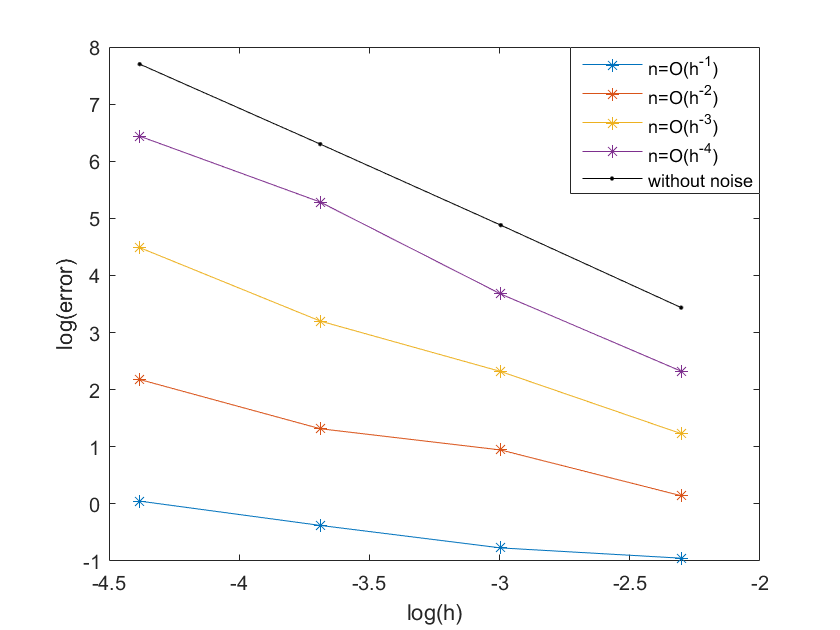}}
\caption{The log-log plot of the convergence rate on the unit square.}
\label{example.1}
\end{figure}

\begin{center}
\begin{table}
\begin{tabular}{ | c | c | c | c | c | c |}
\hline
$n$ & $h$ & $H^1$ error & $\alpha$ & $L^2$ error & $\beta$ \\ \hline \hline
\multirow{2}{*}
{$n=h^{-1}$} 
 & 0.1000 & 8.8686   &             & 0.3978 &\\ \cline{2-6}
 % & 0.0500 & 10.2980 & 0.2156 & 0.2621 & -0.6016\\ \cline{2-6}
 % & 0.0250 & 16.8761 & 0.7126 & 0.2182 & -0.2647\\ \cline{2-6}
 %& 0.0125 & 24.3951 & 0.5316 & 0.1394 & -0.6467 \\ \hline \hline
& 0.0125 & 24.3951 & 0.4866 & 0.1394 & -0.5043 \\ \hline \hline
\multirow{2}{*}
{$n=h^{-2}$} 
 & 0.1000 & 2.8101 & & 0.1348 &\\ \cline{2-6}
 %& 0.0500 & 2.5954 & -0.1147 & 0.0557 & -1.2754\\ \cline{2-6}
 %& 0.0250 & 2.5372 & -0.0327 & 0.0349 & -0.6753\\ \cline{2-6}
 %& 0.0125 & 2.7125 & 0.0964  & 0.0167 & -1.0603 \\ \hline
 & 0.0125 & 2.7125 & -0.0170  & 0.0167 & -1.0037 \\ \hline \hline
 \multirow{2}{*}{$n=h^{-3}$} 
 & 0.1000 & 0.9637 & & 0.0537 &\\ \cline{2-6}
 %& 0.0500 & 0.6439 & -0.5816 & 0.0142 & -1.9198 \\ \cline{2-6}
 %& 0.0250 & 0.4928 & -0.3860 & 0.0057 & -1.3182 \\ \cline{2-6}
 %& 0.0125 & 0.3094 & -0.6716 & 0.0017 & -1.7568 \\ \hline
 & 0.0125 & 0.3094 & -0.5464 & 0.0017 & -1.6649 \\ \hline \hline
 \multirow{2}{*}
{$n=h^{-4}$} 
 & 0.1000 & 0.6325 & & 0.0380 &\\ \cline{2-6}
 %& 0.0500 & 0.3266 & -0.9534 & 0.0097 & -1.9676 \\ \cline{2-6}
 %& 0.0250 & 0.1661 & -0.9760 & 0.0023 & -2.0790 \\ \cline{2-6}
 %& 0.0125 & 0.0838 & -0.9868 & 6.3816e-4 & -1.8502 \\ \hline
 & 0.0125 & 0.0838 & -0.9721 & 6.3816e-4 & -1.9656 \\ \hline
\end{tabular} \vskip0.2cm
\caption{The convergence rate $\alpha$ in the $H^1$ norm and $\beta$ in the $L^2$ norm on the unit square.}\label{tab.1}
\end{table}
\end{center}

\begin{example}\label{numerical.2}
We take $\Omega$ be a unit circle. The mesh is depicted in Figure \ref{example.2.1}. We set $\{x_i\}_{i=1}^n$ being uniformly distributed on $\Ga$, and let $e_i=\eta_i+\alpha_i,~ i=1,2,\cdots, n$, where $\eta_i$ and $\alpha_i$ are independent normal random variables with variance $\sigma_1=1$ and $\sigma_2=10e_i,~ i=1,2,\cdots, n$. We take different $n=h^{-i}$, $i=1,2,3,4$. Figure \ref{example.2} shows the convergence rate of the error in the $H^1$ and $L^2$ norm 
for each choice of $n$. Table \ref{tab.2} show the convergence rate $\alpha$ in the $H^1$ norm and the convergence rate $\beta$ in the $L^2$ norm. Here again we observe the numerical results confirm our theoretical analysis.
\end{example}

\begin{figure}
\includegraphics[width=5cm]{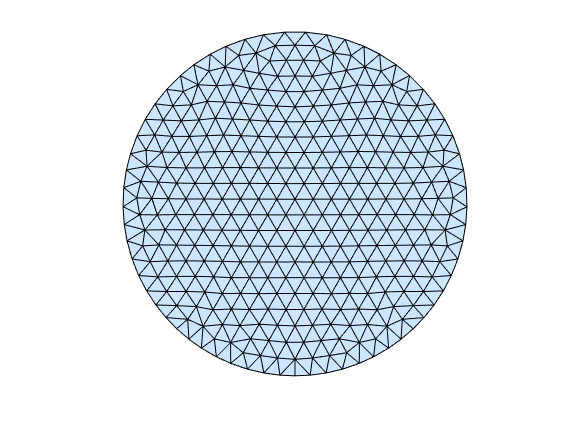}
\caption{The uniform mesh for the unit circle with mesh size $h=0.1$.}
\label{example.2.1}
\end{figure}

\begin{figure}
\subfigure[$H^1$ convergence]{\includegraphics[width=6cm]{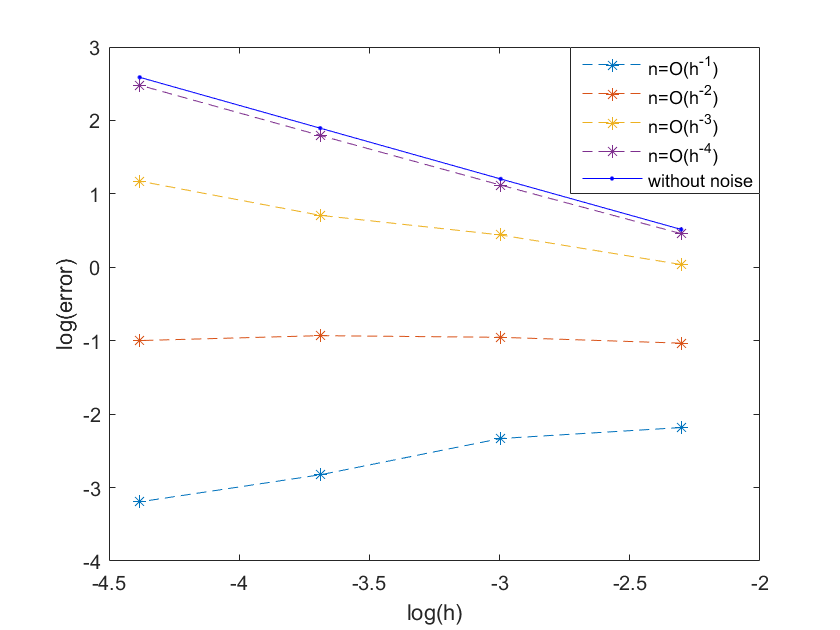}}
\subfigure[$L^2$ convergence]{\includegraphics[width=6cm]{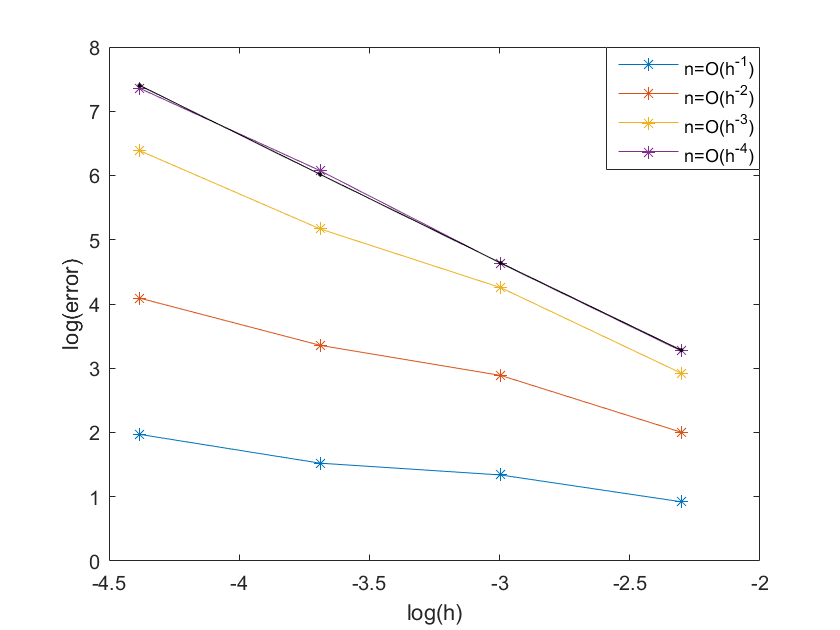}}
\caption{The log-log plot of the convergence rate on the unit circle.}
\label{example.2}
\end{figure}

\begin{center}
\begin{table}
\begin{tabular}{ | c | c | c | c | c | c |}
\hline
$n$ & $h$ & $H^1$ error & $\alpha$ & $L^2$ error & $\beta$ \\ \hline \hline
\multirow{2}{*}
{$n=h^{-1}$} 
 & 0.1000 & 46.5037   &             & 2.5872 &\\ \cline{2-6}
& 0.0125 & 127.832 & 0.4863 & 0.9527 & -0.4804 \\ \hline \hline
\multirow{2}{*}
{$n=h^{-2}$} 
 & 0.1000 & 13.1775 & & 0.8668 &\\ \cline{2-6}
 & 0.0125 & 16.1040 & 0.0964  & 0.1133 & -0.9787 \\ \hline \hline
 \multirow{2}{*}{$n=h^{-3}$} 
 & 0.1000 & 5.4157 & & 0.2924 &\\ \cline{2-6}
 & 0.0125 & 1.7581 & -0.5410 & 0.0113 & -1.5665 \\ \hline \hline
 \multirow{2}{*}
{$n=h^{-4}$} 
 & 0.1000 & 2.0009 & & 0.0980 &\\ \cline{2-6}
 & 0.0125 & 0.2527 & -0.9950 & 0.0016 & -1.9790 \\ \hline
\end{tabular} \vskip0.2cm
\caption{The convergence rate $\alpha$ in the $H^1$ norm and $\beta$ in the $L^2$ norm on the unit circle.}\label{tab.2}
\end{table}
\end{center}

%%%%%%%%%%%%%%%%%%%%%%%%%%%%%%%%%%%%%%%%%%%%%%%%%%%%%%%%%%%%%%

\end{document}